\documentclass[12pt,letterpaper]{amsart}
\usepackage{amssymb,amsmath,amscd,eucal,epsfig}
\def\<{\langle}                     
\def\>{\rangle}                     

\newcommand{\ben}{\begin{enumerate}}
\newcommand{\een}{\end{enumerate}}

\theoremstyle{plain}
\newtheorem{theorem}{Theorem}[section]
\newtheorem{proposition}{Proposition}[section]
\newtheorem{lemma}{Lemma}[section]
\newtheorem{corollary}{Corollary}[section]
\newtheorem{remark}{Remark}[section]

\theoremstyle{definition}
\newtheorem{definition}{Definition}[section]

\numberwithin{equation}{section}
\begin{document}

\begin{center}
{\textbf{{Convexity of Picture Fuzzy Multisets}}}\ \\ \ \\

{Taiwo O. Sangodapo},\\
Department of Mathematics,\\ University of Ibadan, Ibadan, Nigeria\\toewuola77@gmail.com
\end{center}
\ \\ \  \\

\noindent {\textbf{Abstract:}} 
In this paper, the notion of convexity of picture fuzzy multisets was introduced and some of their properties were presented after studying the concept of picture fuzzy multisets. 
\; \\ \; \\

\noindent {\textbf{Keywords}}: Fuzzy set, Convex set, Convex Fuzzy Set, Picture Fuzzy Set, Picture Fuzzy multisets
\; \\ \; \\

\noindent \textbf{MSC Classification:} 03E72, 35E10, 52A20
\ \\ \ \\

\section{Introduction} In 1965, Zadeh \cite{z} introduced convexity of fuzzy sets as an extension of classical convex sets. In 1980, Lowen \cite{l} extended Zadeh's work by introducing the concept of affine fuzzy sets in order to study convex fuzzy sets in greater details and investigated the properties of both convex and affine fuzzy sets using Euclidean points. After major contribution to Zadeh's work on convex fuzzy sets, many researchers have also studied the concept, such as, Liu \cite{l2} 1985 established some properties of convex fuzzy sets. Yang \cite{y} 1988, established the relationship among three types of convex fuzzy sets. In 2002, Yang and Yang \cite{yy} were able to characterised convexity of fuzzy sets via semicontinuous conditions. Peng \cite{p} 2010, proposed two new definitions of convex fuzzy sets, i.e., convex (s, t]-fuzzy sets via relation between fuzzy points and fuzzy subsets, and convex R-fuzzy sets via implication operators of fuzzy logic. In 2010, Cheng et al \cite{c} put forward the concept of (s, t]-intuitionistic convex fuzzy sets via cut set of intuitionistic fuzzy sets and neighborhood relations between fuzzy point and intuitionistic fuzzy set. In 2017, Sangodapo and Ajayi \cite{sa1} contributed to Lowen's work by establishing some properties of convex fuzzy sets using fuzzy points.

In 1986, Atanassov \cite{a1} initiated the concept of intuitionistic fuzzy sets (IFSs). In 2012, Huang \cite{h} generalised convex fuzzy sets to convex intuitionistic fuzzy sets and obtained equal characteristics in terms of cut sets. In 2018, Das and Mukhlalsah \cite{dm} also studied intuitionistic fuzy sets and developed intuitionistic concex fuzzy sets, established various theorems and illustrated with examples. In 2021, Sangodapo and Ajayi \cite{sa2} extended the Huang's work to affine intuitionistic fuzzy sets and also established their characteristics.

Cuong and Kreinovich \cite{ck} in 2013, generalised Zadeh and Atanassov's works by putting forward the theory of Picture fuzzy sets (PFSs). This theory is a new concept for computational intelligence problems to handle an important notion of neutrality degree that was lacking in IFSs theory. In 2014, Cuong \cite{c3} investigated some characteristics of PFSs, introduced distance measure and defined convex combination between two PFSs. Son \cite{s} in 2016, introduced a generalised picture distance measure and applied it to establish an Hierarchical Picture Clustering. In 2017, Dutta and Ganju \cite{dg} studied decomposition theorems of PFSs, defined $(\alpha, \gamma, \beta)$-cut of PFSs, and extension principle for PFSs and obtained their properties. PFSs has been extensively studied and applied, see \cite{dg, dp, g, tkc} for details.

Yagar in 1986 \cite{ya}, put forward the notion of fuzzy multisets (FMs). In 2013, Shinoj and Sunil \cite{ss1} initiated intuitionistic fuzzy multiset (IFMSs). In \cite{cfs}, Cao et al introduced picture fuzzy multisets (PFMSs) as a generalisation of FM and IFMS, \cite{ya} and \cite{ss1}, respectively and also as an extension of PFSs. In 2022. Sangodapo \cite{s} established the notion of convexity of picture fuzzy sets and obtained some properties of them. For more extension of PFMSs see \cite{d1, d2, d3}. Picture fuzzy multiset has also been extended to group theory, see \cite{t2, d4, sg, sy, p, tb}

In this paper, we study the work of Sangodapo \cite{s1} and extend it to the convexity of picture fuzzy multisets and investigated some of their properties.
The organisations of the paper is as follows. In section 2, we give necessary definitions and preliminary ideas of picture fuzzy multisets. Section 3 introduces some properties of convex picture fuzzy set and investigates its properties.  
\ \\

\section{PRELIMINARIES}
In this section, we state some basic definitions related to convex fuzzy sets, picture fuzzy sets and picture fuzzy multisets.

\begin{definition} \cite{ck}
A picture fuzzy set $A$ of $X$ is an object of the form $$A = \lbrace (x, \sigma_{A}(x), \tau_{A}(x), \gamma_{A}(x))| x \in X \rbrace,$$ where the functions $$\sigma_{A}: X \rightarrow [0, 1],~ \tau_{A}: X \rightarrow [0, 1]~ \text{and}~ \gamma_{A}: X \rightarrow [0, 1]$$ 
are called the positive, neutral and negative membership degrees of $x \in A$, respectively, and $\sigma_{A}, \tau_{A}, \gamma_{A}$ satisfy $$0 \leq \sigma_{A}(x) + \tau_{A}(x) + \gamma_{A}(x) \leq 1,~ \forall x \in X.$$ For each $x \in X$, $1 - (\sigma_{A}(x) + \tau_{A}(x) + \gamma_{A}(x))$ is called the refusal membership degree of $x \in A$.  
\end{definition}

\begin{definition} \cite{ck}
Let $A$ and $B$ be two PFSs. Then, the inclusion, equality, union, intersection and complement are defined as follow:
\begin{itemize}
\item $A \subseteq B$ if and only if for all $x \in X$, $\sigma_{A}(x) \leq \sigma_{B}(x)$, $\tau_{A}(x) \leq \tau_{B}(x)$ and $\gamma_{A}(x) \geq \gamma_{B}(x).$
\item $A = B$ if and only if $A \subseteq B$ and $B \subseteq A.$ 
\item $A \cup B = \lbrace ( x, \sigma_{A}(x) \vee \sigma_B(x), \tau_A(x) \wedge \tau_B(x)), \gamma_A(x) \wedge \gamma_B(x)) | x \in X \rbrace.$
\item $A \cap B = \lbrace ( x, \sigma_{A}(x) \wedge \sigma_B(x), \tau_A(x) \wedge \tau_B(x)), \gamma_A(x) \vee \gamma_B(x)) | x \in X \rbrace.$
\item $\overline{A} = \lbrace (x, \gamma_{A}(x), \tau_{A}(x), \sigma_{A}(x))| x \in X \rbrace.$
\end{itemize}
\end{definition}

\begin{definition} \cite{z}
A FS $A$ of $X$ is called convex fuzzy set if for all $x, y \in X$ and $\lambda \in [0, 1]$,
$$\sigma_{A} [\lambda x + (1 - \lambda) y] \geq \sigma_{A}(x) \wedge \sigma_{A}(y)$$
\end{definition}

\begin{definition} \cite{ck} \label{a1}
Let $A,~ B$ be PFSs on $X$ and $\lambda \in [0, 1]$. Then, the convex combination of $A~ \text{and}~ B$ is defined as $$C_{\lambda}(A,~B) = \lbrace (x, \sigma_{C_{\lambda}}(x), \tau_{C_{\lambda}}(x), \gamma_{C_{\lambda}}(x))| x \in X \rbrace$$ where $$\sigma_{C_{\lambda}}(x) = \lambda \sigma_{A}(x) + (1 - \lambda) \sigma_{B}(x),$$ $$\tau_{C_{\lambda}}(x) = \lambda \tau_{A}(x) + (1 - \lambda) \tau_{B}(x),$$ $$\gamma_{C_{\lambda}}(x) = \lambda \gamma_{A}(x) + (1 - \lambda) \gamma_{B}(x) $$ $\forall~ x \in X.$   
\end{definition}

\begin{remark} \label{i}
Definition \ref{a1} can be rewritten as a convex combination of two points of a PFS $A$ on $X$. That is;
$$PCC(A) = \lbrace ((x, y), (\sigma; \lambda)(x, y), (\tau, ; \lambda)(x, y), (\gamma; \lambda)(x, y))| x, y \in E \rbrace$$ for a PFS $A$ and $\lambda \in [0, 1]$ where
$$(\sigma; \lambda)(x, y) = (1- \lambda) \sigma_{A}(x) + \lambda \sigma_{A}(y)$$ 
$$(\tau; \lambda)(x, y) = (1- \lambda) \tau_{A}(x) + \lambda \tau_{A}(y)$$ 
and $$(\gamma; \lambda)(x,y) = (1- \lambda) \gamma_{A}(x) + \lambda \gamma_{A}(y)$$ 
 $\forall~ x, y \in E.$ 
\end{remark}

\begin{definition} \cite{dp}
Let $A  = \lbrace (x, \sigma_{A}, \tau_{A}, \gamma_{A})| x \in X \rbrace$ be PFS over the universe $X$. Then, $(r,s,t)$-cut of $A$ is the crisp set in $A$, denoted by $C_{r, s, t} (A)$ and is defined by $$C_{r, s, t} (A) = \left\lbrace x \in X | \sigma_{A}(x) \geq r,~\tau_{A}(x) \geq s, \gamma_{A}(x) \leq t  \right\rbrace$$ $r, s, t \in [0, 1]$ with the condition  $0 \leq r + s + t \leq 1$
\end{definition}
\ \\
\noindent Brown \cite{b} and Zadeh \cite{z} established that convexity of fuzzy set and cut set of fuzzy set are equivalent.

\begin{lemma} \cite{{b}, {z}}
A fuzzy set $A$ is convex if and only if its cut sets are convex.
\end{lemma}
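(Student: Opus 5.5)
We prove both directions straight from the definitions. Here a fuzzy set $A$ on a real vector space $X$ has membership function $\sigma_A$. Convexity means $\sigma_A[\lambda x+(1-\lambda)y]\geq \sigma_A(x)\wedge\sigma_A(y)$. The cut sets are the crisp sets $A_\alpha=\{x\in X \mid \sigma_A(x)\geq \alpha\}$ for $\alpha\in[0,1]$.

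First, suppose $A$ is convex and fix $\alpha\in[0,1]$. Take $x,y\in A_\alpha$ and $\lambda\in[0,1]$. Then $\sigma_A(x)\geq\alpha$ and $\sigma_A(y)\geq\alpha$, so $\sigma_A(x)\wedge\sigma_A(y)\geq\alpha$. Convexity gives
$$\sigma_A[\lambda x+(1-\lambda)y]\geq \sigma_A(x)\wedge\sigma_A(y)\geq\alpha.$$
Hence $\lambda x+(1-\lambda)y\in A_\alpha$, so $A_\alpha$ is a convex crisp set. If $A_\alpha=\emptyset$, it is convex vacuously.

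Conversely, suppose every $A_\alpha$ is convex. Take arbitrary $x,y\in X$ and $\lambda\in[0,1]$, and choose the specific level $\alpha=\sigma_A(x)\wedge\sigma_A(y)\in[0,1]$. By construction $x,y\in A_\alpha$. Convexity of $A_\alpha$ gives $\lambda x+(1-\lambda)y\in A_\alpha$, that is,
$$\sigma_A[\lambda x+(1-\lambda)y]\geq\alpha=\sigma_A(x)\wedge\sigma_A(y).$$
This is exactly the defining inequality of a convex fuzzy set.

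The argument is elementary. The only step needing care is in the converse: the level must be chosen as $\alpha=\sigma_A(x)\wedge\sigma_A(y)$, and the cuts must be the closed (non-strict) cuts with $\geq$. With strict cuts $\{\sigma_A>\alpha\}$, the points $x,y$ need not lie in the cut. One would then have to use a limiting argument, taking $\alpha_n\uparrow\sigma_A(x)\wedge\sigma_A(y)$ and passing to the limit in the inequality. So I will state the lemma with the non-strict cuts $A_\alpha$ defined above.
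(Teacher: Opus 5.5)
Your proof is correct and is the standard argument. The paper states this lemma without proof, citing Brown and Zadeh, but it proves its picture fuzzy multiset analogue, Proposition~\ref{h}, with the same two steps: bound $\sigma_A(x)\wedge\sigma_A(y)$ below by the level for the forward direction, and choose the level $\alpha=\sigma_A(x)\wedge\sigma_A(y)$ for the converse.
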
 

\begin{definition} \cite{y}
A multiset or bag $A$ drawn from a set $X$ is characterised by a function $Count_A$ such that $ Count_A: X \rightarrow N $ defined by $ C_A(x) = n \in \mathbb{N}$ where $N$ is the set of non-negative integers.
\end{definition}

\begin{definition}\cite{y}
A fuzzy multiset (FMS) $\mathcal{D}$ drawn from $\mathcal{C}$ is characterised by a count membership function $cm_{\mathcal{D}}$ such that $cm_{\mathcal{D}}: \mathcal{C} \rightarrow \mathcal{N}$, where $\mathcal{N}$ is the set of all crisp multisets drawn from $\mathbb{I}$. Then, for any $r \in \mathcal{C}$, the value $cm_{\mathcal{D}}(r)$ is a crisp multiset drawn from $\mathbb{I}.$ For any $r \in \mathcal{C}$, the membership sequence is defined as the decreasingly ordered sequence of elements in $cm_{\mathcal{D}}(r)$. It is denoted by $(\sigma^{1}_{\mathcal{\mathcal{D}}}(r), \sigma^{2}_{\mathcal{\mathcal{D}}}(r), \cdots, \sigma^{d}_{\mathcal{\mathcal{D}}}(r))$ where $\sigma^{1}_{\mathcal{D}}(r) \geq \sigma^{2}_{\mathcal{D}}(r) \geq \cdots \geq \sigma^{d}_{\mathcal{D}}(r).$
\end{definition}

\begin{definition} \cite{cfs}
Given a nonempty set $\mathcal{C}.$ The picture fuzzy multiset (PFMS) $\mathcal{D}$ in $\mathcal{C}$ is characterised by three functions namely positive membership count function $pmc$, neutral membership count function $n_{e}mc$ and negative membership count function $nmc$ such that $pmc, n_{e}mc, nmc: \mathcal{C} \rightarrow \mathcal{N}$,
where $\mathcal{N},$ is refer to collection of crisp multisets taken from $\mathbb{I}.$ Thus, every element $r \in \mathcal{C}$, $pmc$ is the crisp multiset from $\mathbb{I}$ whose positive membership sequence is defined by $(\sigma^{1}_{\mathcal{D}}(r), \sigma^{2}_{\mathcal{D}}(r), \cdots, \sigma^{n}_{\mathcal{D}}(r))$ such that $\sigma^{1}_{\mathcal{D}}(r) \geq \sigma^{2}_{\mathcal{D}}(r) \geq \cdots \geq \sigma^{n}_{\mathcal{D}}(r)$, $n_{e}mc$ is the crisp multiset from $\mathbb{I}$ whose neutral membership sequence is defined by $(\tau^{1}_{\mathcal{D}}(r), \tau^{2}_{\mathcal{D}}(r), \cdots, \tau^{n}_{\mathcal{D}}(r))$ and $nmc$ is the crisp multiset from $\mathbb{I}$ whose negative membership sequence is defined by $(\eta^{1}_{\mathcal{D}}(r), \eta^{2}_{\mathcal{D}}(r), \cdots, \eta^{n}_{\mathcal{D}}(r))$, these can be either decreasing or increasing functions satisfying $0 \leq \sigma^{k}_{\mathcal{D}}(r) + \tau^{k}_{\mathcal{D}}(r) + \eta^{k}_{\mathcal{D}}(r) \leq 1$ $ \forall r \in \mathcal{C},$~ $k = 1, 2, \cdots, n.$\\ \ \\ Thus, $\mathcal{D}$ is represented by $$\mathcal{D} = \lbrace \langle r, \sigma^{k}_{\mathcal{D}}(r), \tau^{k}_{\mathcal{D}}(r), \eta^{k}_{\mathcal{D}}(r)   \rangle~ |~ r \in \mathcal{C}  \rbrace,$$ $k = 1, 2, \cdots, n.$
\end{definition}

\noindent The set of all picture fuzzy multisets over $\mathcal{C},$ is denoted as PFMS($\mathcal{C}$).

\begin{definition} \cite{s1} \label{1}
Let $A$ be a PFS on $E$. Then, $A$ is said to be convex, if for all $x, y \in E$ and $\lambda \in \mathbb{I}$,
$$ \sigma_{A} [(1 - \lambda)x + \lambda y] \geq \sigma_{A}(x) \wedge \sigma_{A}(y),$$ $$\tau_{A} [(1 - \lambda)x + \lambda y] \geq \tau_{A}(x) \wedge \tau_{A}(y)$$ and $$\gamma_{A}[(1 - \lambda)x + \lambda y] \leq \gamma_{A}(x) \vee \gamma_{A}(y)$$
\end{definition}

\noindent In \cite{s1}, Definition \ref{1} to finitely many points of PFS $A$ as follows:

\begin{definition}  \cite{s1} \label{2}
Let $A$ be a PFS on $E$, $x_1, \cdots, x_{n} \in A$. Then, the picture convex combination of $A$, denoted by $PCC(A)$ is a point $x$ with $x = \sum \limits_{i=1}^{n} \lambda_{i} x_i$ where $$x = (\sigma_x, \tau_x, \gamma_x ),~~~ \lambda_i = (\lambda_{i \sigma}, \lambda_{i \tau}, \lambda_{i \gamma} ),~~~ x_i = ( \sigma_{A}(x_i), \tau_{A}(x_i), \gamma_{A}(x_i) ),$$~~~ $$\sigma_x = \sum \limits_{i=1}^{n} \lambda_{i \sigma} \sigma_{A}(x_i), ~~~~~~~~~~ \tau_x = \sum \limits_{i=1}^{n} \lambda_{i \tau} \tau_{A}(x_i),~~~~~~~~~~ \gamma_x = \sum \limits_{i=1}^{n} \lambda_{i \gamma} \gamma_{A}(x_i)$$  $\sum \limits_{i=1}^{n} (\lambda_{i \sigma} + \lambda_{i \tau} + \lambda_{i \gamma}) = 1$~ and~ $ (\lambda_{i \sigma} + \lambda_{i \tau} + \lambda_{i \gamma}) \in \mathbb{I}$ for each $i$.
\end{definition}

\noindent In \cite{s1}, Definition \ref{1} was defined in terms of Definition \ref{2} as;

\begin{definition}  \cite{s1}
Let $A$ be a PFS on $E$ and $x_1, \cdots, x_{n}$ be points in $A$. Then, $A$ is said to be convex if for all $x_{i} \in A,~ i = 1, \cdots, n$ and $\lambda \in \mathbb{I}$ $$\sigma_A \{ \sum \limits_{i=1}^{n} \lambda_{i} x_i \} \geq \sigma_{A}(x_1) \wedge \cdots \wedge \sigma_{A}(x_n),$$ $$\tau_A \{ \sum \limits_{i=1}^{n} \lambda_{i} a_i \} \geq \tau_{A}(x_1) \wedge \cdots \wedge \tau_{A}(x_n),$$ and $$\gamma_A \{ \sum \limits_{i=1}^{n} \lambda_{i} a_i \} \leq \gamma_{A}(x_1) \vee \cdots \vee \gamma_{A}(x_n)$$ and $\sum \limits_{i=1}^{d} \lambda_i = 1$. Where $\lambda_i$ and $x_i$ are as defined above.
\end{definition}

\begin{definition}\cite{s1}
Let $A = \lbrace (x, \sigma_{A}(x), \tau_{A}(x), \gamma_{A}(x))| x \in E \rbrace$ be PFS. Then, the picture convex hull of $A$, denoted by $Pch(A)$ is defined as $$Pch(A):= \lbrace x = \sum \limits_{i=1}^{n} \lambda_{i} x_i : x_{i} \in A; n \geq 1, \lambda_{i} \in \mathbb{I}, \sum \limits_{i=1}^{n} \lambda_{i} = 1 \rbrace$$ where $x$, $\lambda_i$ and $x_i$ are as defined in Definition \ref{2} and the positive, neutral and negative membership degrees are respectively defined as follow: $$\sigma_{Pch(A)}(x) = \wedge \lbrace \sigma_{B}(x) | \sigma_{B} \geq \sigma_{A}(x)\rbrace,$$ $$\tau_{Pch(A)}(x)(x) = \wedge \lbrace \tau_{B}(x) | \tau_{B} \geq \tau_{A}(x)\rbrace$$ $$\gamma_{Pch(A)}(x)(x) = \vee \lbrace \gamma_{B}(x) | \gamma_{B} \leq \gamma_{A}(x)\rbrace,$$ where $B$ is a PFS of the form $B = \lbrace ( x, \sigma_{B}(x), \tau_{B}(x), \gamma_{B}(x) ) | x \in E \rbrace$.  
\end{definition}

\begin{proposition} \cite{s1}
A PFS $A = \lbrace ( x, \sigma_{A}(x), \tau_{A}(x), \gamma_{A}(x) ) | x \in E \rbrace$ is PCFS if and only if $C_{r, s, t} (A)$ is a PCFS.
\end{proposition}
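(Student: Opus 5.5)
The statement should be read as: $A$ is a picture convex fuzzy set (in the sense of Definition \ref{1}) if and only if every cut $C_{r,s,t}(A)$, with $r,s,t\in[0,1]$ and $0\le r+s+t\le 1$, is a convex subset of $E$. Under this reading the proof is the picture analogue of the Brown--Zadeh lemma quoted above. The plan is to prove the two directions separately, working componentwise on $\sigma_A$, $\tau_A$ and $\gamma_A$. The only new point compared with the fuzzy case is that the three thresholds must be chosen at once, and the triple must satisfy $r+s+t\le 1$.

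\emph{($\Rightarrow$)} Assume $A$ is convex and fix an admissible triple $(r,s,t)$. Take $x,y\in C_{r,s,t}(A)$ and $\lambda\in\mathbb{I}$, and put $z=(1-\lambda)x+\lambda y$. Since $\sigma_A(x),\sigma_A(y)\ge r$, the first inequality of Definition \ref{1} gives $\sigma_A(z)\ge\sigma_A(x)\wedge\sigma_A(y)\ge r$. In the same way the second inequality gives $\tau_A(z)\ge s$. The third gives $\gamma_A(z)\le\gamma_A(x)\vee\gamma_A(y)\le t$. Hence $z\in C_{r,s,t}(A)$, so the cut is convex. This direction is routine.

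\emph{($\Leftarrow$)} Assume every admissible cut is convex, and fix $x,y\in E$ and $\lambda\in\mathbb{I}$. The natural choice of thresholds is
$$r=\sigma_A(x)\wedge\sigma_A(y),\quad s=\tau_A(x)\wedge\tau_A(y),\quad t=\gamma_A(x)\vee\gamma_A(y).$$
With this choice $x,y\in C_{r,s,t}(A)$ by construction. Convexity of the cut then puts $(1-\lambda)x+\lambda y$ in the cut, which yields all three inequalities of Definition \ref{1} at once.

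The main obstacle is checking that this triple is admissible, i.e.\ $r+s+t\le 1$. This can fail: $r+s$ is bounded by the values at one point, but $t$ may come from the other point. For example, take $\sigma_A(x)=0.5$, $\gamma_A(x)=0$ and $\sigma_A(y)=0.5$, $\gamma_A(y)=0.5$; then $r+t=1$, and adding any $s>0$ exceeds $1$. To get around this, the plan is to decouple the three conditions using separate cuts:

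\begin{itemize}
\item For $\sigma_A$, use the cut $C_{r,0,1}$ with $r=\sigma_A(x)\wedge\sigma_A(y)$. Here $r+0+1\le 1$ forces $r=0$, so this cut is too restrictive as it stands. Instead I would first note that for any $r\le 1$ the triple $(r,0,1-r)$ is admissible. Then I would try to show that one-parameter cuts $\{\sigma_A\ge r\}$ are convex by writing them as suitable unions or intersections of admissible cuts.
\item For $\tau_A$ and $\gamma_A$, use the same reduction, treating $\gamma$ through $\{\gamma_A\le t\}$.
\end{itemize}

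If this decomposition does not work with the $r+s+t\le 1$ constraint, I would fall back on the paper's likely intended convention: treat the cut condition as applying to each component level set separately. Then each component reduces directly to the Brown--Zadeh lemma, applied to $\sigma_A$, to $\tau_A$, and to $1-\gamma_A$. The extension to $n$ points via picture convex combinations would then follow by a standard induction on $n$.
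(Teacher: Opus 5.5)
Your forward direction is fine. Your first choice for the converse, $r=\sigma_A(x)\wedge\sigma_A(y)$, $s=\tau_A(x)\wedge\tau_A(y)$, $t=\gamma_A(x)\vee\gamma_A(y)$, is exactly the argument the paper uses. It is written out there for the multiset analogue, Proposition \ref{h}, and the paper never checks admissibility.

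The gap is that you then abandon that argument. You claim this triple can violate $r+s+t\le 1$, and you leave the converse resting on an unfinished decomposition or on a reinterpretation of the statement. The claim is false. Because $t$ is a maximum of two numbers, $t=\gamma_A(p)$ for some $p\in\{x,y\}$. Because $r$ and $s$ are minima, $r\le\sigma_A(p)$ and $s\le\tau_A(p)$ for that same $p$. Hence $r+s+t\le\sigma_A(p)+\tau_A(p)+\gamma_A(p)\le 1$, so the triple is always admissible. Your remark that ``$r+s$ is bounded by the values at one point, but $t$ may come from the other'' is where the reasoning slips: minima are bounded by the values at both points. Your counterexample fails for the same reason. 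The condition $\sigma_A(y)=\gamma_A(y)=0.5$ forces $\tau_A(y)=0$, so $s=0$ and $r+s+t=1$, which is allowed.

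Add that one line and your direct argument is complete. The decoupling plan and the fallback convention are then unnecessary; the fallback would in any case prove a different statement. The decoupling would also have closed at once. If $\sigma_A(z)\ge r$, then $\gamma_A(z)\le 1-\sigma_A(z)-\tau_A(z)\le 1-r$, so $\{z:\sigma_A(z)\ge r\}=C_{r,0,1-r}(A)$, an admissible cut rather than something to build from unions. Likewise $\{z:\tau_A(z)\ge s\}=C_{0,s,1-s}(A)$ and $\{z:\gamma_A(z)\le t\}=C_{0,0,t}(A)$. The closing remark about induction on $n$ points is not needed for this statement.
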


\begin{proposition} \cite{s1}
Let $A = \lbrace ( x, \sigma_{A}(x), \tau_{A}(x), \gamma_{A}(x) ) | x \in E \rbrace$ and $B = \lbrace ( x, \sigma_{B}(x), \tau_{B}(x), \gamma_{B}(x) ) | x \in E \rbrace$ be PCFSs. Then, $D = A \cap B$ is a PCFS.
\end{proposition}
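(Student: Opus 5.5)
The plan is to verify the three defining inequalities of Definition \ref{1} directly for $D = A \cap B$, one component at a time. By the definition of intersection of PFSs,
$$\sigma_D(x) = \sigma_A(x) \wedge \sigma_B(x),\quad \tau_D(x) = \tau_A(x) \wedge \tau_B(x),\quad \gamma_D(x) = \gamma_A(x) \vee \gamma_B(x)$$
for all $x \in E$. First I would check that $D$ is a PFS at all. Since $\sigma_D \le \sigma_A$, $\tau_D \le \tau_A$ and $\gamma_D$ equals either $\gamma_A$ or $\gamma_B$, the sum $\sigma_D + \tau_D + \gamma_D$ is bounded by the corresponding sum for $A$ or for $B$, hence by $1$.

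Next, fix $x, y \in E$, $\lambda \in \mathbb{I}$, and set $z = (1-\lambda)x + \lambda y$. For the positive degree, convexity of $A$ and of $B$ gives $\sigma_A(z) \ge \sigma_A(x)\wedge\sigma_A(y)$ and $\sigma_B(z) \ge \sigma_B(x)\wedge\sigma_B(y)$. Taking the minimum of the two inequalities and using associativity and commutativity of $\wedge$,
$$\sigma_D(z) \ge \big(\sigma_A(x)\wedge\sigma_B(x)\big)\wedge\big(\sigma_A(y)\wedge\sigma_B(y)\big) = \sigma_D(x)\wedge\sigma_D(y).$$
The neutral degree is handled identically with $\tau$ in place of $\sigma$. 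For the negative degree I would argue dually: $\gamma_A(z) \le \gamma_A(x)\vee\gamma_A(y)$ and $\gamma_B(z)\le\gamma_B(x)\vee\gamma_B(y)$, so taking the maximum and regrouping gives $\gamma_D(z) \le \gamma_D(x)\vee\gamma_D(y)$.

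As an alternative, I could use the preceding Proposition (a PFS is PCFS iff its cuts are) together with the identity $C_{r,s,t}(A\cap B) = C_{r,s,t}(A)\cap C_{r,s,t}(B)$ and the fact that an intersection of convex crisp sets is convex. The direct route is cleaner, though.

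I expect no real obstacle here. The only point needing care is the lattice regrouping step, namely that $(a\wedge b)\wedge(c\wedge d) = (a\wedge c)\wedge(b\wedge d)$ and the dual for $\vee$, together with monotonicity: if $u\ge p$ and $v\ge q$ then $u\wedge v \ge p\wedge q$. The one thing that could go wrong is the paper's convention for the neutral degree under intersection. It uses $\wedge$ for $\tau$, which makes the $\tau$-case parallel to the $\sigma$-case, so the argument goes through as written.
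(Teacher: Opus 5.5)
Your proposal is correct and follows the same route as the paper: the paper does not reprove this cited result, but it proves the multiset analogue in Section 3 by the same direct componentwise check, applying convexity of each factor at $(1-\lambda)x+\lambda y$ and regrouping the meets (dually the joins for the negative degree). Your version is in fact tidier than the paper's written chain, which swaps the $=$ and $\geq$ signs, and your preliminary check that $\sigma_D+\tau_D+\gamma_D\le 1$ (so that $D$ is a PFS) is a welcome addition the paper omits.
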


\begin{corollary} \cite{s1}
Given any arbitrary family $\{ A_i, i = 1, 2, \cdots, \}$ of PCFSs in $E$, then their intersection is also a PCFS.
\end{corollary}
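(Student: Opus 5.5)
The plan is to reduce the statement about an arbitrary family to the pointwise definition of convexity, Definition \ref{1}, rather than to iterate the two-set proposition. Iterating would only give finite intersections, since the family $\{A_i\}$ may be infinite. First I fix what the intersection means for an arbitrary family, extending the binary definition of $\cap$ for PFSs. Put $D = \bigcap_i A_i$ with
$$\sigma_D(x) = \bigwedge_i \sigma_{A_i}(x), \quad \tau_D(x) = \bigwedge_i \tau_{A_i}(x), \quad \gamma_D(x) = \bigvee_i \gamma_{A_i}(x).$$
Infima and suprema of $[0,1]$-valued families exist, so these are well defined. I also need to check that $D$ is a PFS. This holds because, for any fixed index $j$, $\sigma_D(x)+\tau_D(x)+\gamma_D(x) \le \sigma_{A_j}(x)+\tau_{A_j}(x)+\gamma_D(x)$. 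The step where the supremum $\gamma_D$ must be controlled is handled in the last paragraph.

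Next, fix $x,y \in E$ and $\lambda \in \mathbb{I}$, and let $z=(1-\lambda)x+\lambda y$. For every index $i$, convexity of $A_i$ gives
$$\sigma_{A_i}(z) \ge \sigma_{A_i}(x)\wedge\sigma_{A_i}(y) \ge \sigma_D(x)\wedge\sigma_D(y),$$
because $\sigma_D \le \sigma_{A_i}$ pointwise. Taking the infimum over $i$ on the left yields $\sigma_D(z)\ge \sigma_D(x)\wedge\sigma_D(y)$. The same argument word for word gives the inequality for $\tau_D$. For $\gamma$ the inequalities reverse:
$$\gamma_{A_i}(z)\le \gamma_{A_i}(x)\vee\gamma_{A_i}(y)\le \gamma_D(x)\vee\gamma_D(y)$$
for every $i$, and taking the supremum over $i$ gives $\gamma_D(z)\le\gamma_D(x)\vee\gamma_D(y)$. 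By Definition \ref{1}, $D$ is then a PCFS. As an alternative route, one could use the cut-set characterization, the proposition that $A$ is a PCFS iff $C_{r,s,t}(A)$ is. One would show $C_{r,s,t}(D)=\bigcap_i C_{r,s,t}(A_i)$ and note that an intersection of convex crisp sets is convex. The direct route avoids subtleties with $\sup$ attaining thresholds, however.

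The main obstacle is the well-definedness step, namely that $D$ satisfies $\sigma_D+\tau_D+\gamma_D\le 1$. The $\gamma$ component is a supremum over $i$, while $\sigma_D$ and $\tau_D$ are infima, so the constraint for each single $A_i$ does not immediately transfer. I will first try the estimate $\sigma_D(x)+\tau_D(x)\le \sigma_{A_i}(x)+\tau_{A_i}(x)\le 1-\gamma_{A_i}(x)$ for each $i$, followed by taking the infimum of the right-hand side over $i$. If this does not close the argument, I will note that the paper's binary intersection already faces this issue. In that case I will adopt the same convention as the two-set proposition, so that the corollary inherits whatever well-definedness that proposition assumes, since the convexity inequalities themselves are unaffected.
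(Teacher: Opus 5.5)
Your proof is correct. The well-definedness step you hedged on does close: from $\sigma_D(x)+\tau_D(x)\le 1-\gamma_{A_i}(x)$ for every $i$ you get $\sigma_D(x)+\tau_D(x)\le \inf_i\bigl(1-\gamma_{A_i}(x)\bigr)=1-\gamma_D(x)$. So you can drop the fallback. The binary intersection never had this issue either, since $\gamma_A\vee\gamma_B$ pointwise equals one of $\gamma_A,\gamma_B$.

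Your route differs from the paper's. The paper does not prove this cited corollary. Its only argument of this type is the proof of the PFMS analogue in Section 3, which transplants the crisp-set argument: a point of the intersection lies in every member, each member is convex, so the combination lies in every member and hence in the intersection. That proof treats membership values as if they were points belonging to sets, and it never checks that the intersection satisfies $\sigma+\tau+\gamma\le 1$.

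Your argument works with the membership functions themselves. You bound $\sigma_{A_i}(z)$ below by the $i$-independent quantity $\sigma_D(x)\wedge\sigma_D(y)$ and then pass to the infimum (supremum for $\gamma$). This is what handles an infinite family, where iterating the two-set proposition would not.

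The paper's approach buys brevity and the crisp-set intuition, but as written it does not verify the defining inequalities. Your cut-set alternative is the rigorous form of that intuition, and it has no threshold subtlety because the cut conditions are non-strict. We have $\inf_i\sigma_{A_i}(x)\ge r$ iff $\sigma_{A_i}(x)\ge r$ for all $i$, and $\sup_i\gamma_{A_i}(x)\le t$ iff $\gamma_{A_i}(x)\le t$ for all $i$. Hence $C_{r,s,t}(D)=\bigcap_i C_{r,s,t}(A_i)$ exactly.
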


\begin{proposition} \cite{s1}
A PFS $A$ is a PCFS if and only if for all finite family $r_{i} \in E$ and $\lambda_{i} \in \mathbb{I},~ i = 1, 2, \cdots n$ such that $\sum \limits_{i=1}^{n} \lambda_i = 1$, we have $$\sigma_{A} \left(\sum \limits_{i=1}^{n} \lambda_{i} r_{i} \right) \geq \bigwedge^{n}_{i=1} \sigma_{A} (r_i),~~~~\tau_{A} \left(\sum \limits_{i=1}^{n} \lambda_{i} r_{i} \right) \geq \bigwedge^{n}_{i=1} \tau_{A} (r_i), ~~~~\gamma_{A} \left(\sum \limits_{i=1}^{n} \lambda_{i} r_{i} \right) \leq \bigvee^{n}_{i=1} \gamma_{A} (r_i) \ \ \ \ \ \ \ \ \ \ (1).$$ 
\end{proposition}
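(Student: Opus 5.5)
The plan is to prove the two implications separately. The sufficiency direction is immediate. The necessity direction goes by induction on the number $n$ of points, using Definition \ref{1} as the two-point case. Throughout, $E$ is a real vector space (or a convex subset of one), so that convex combinations of points of $E$ make sense.

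\emph{Sufficiency.} Suppose (1) holds for every finite family. Take $n=2$, $r_1=x$, $r_2=y$, $\lambda_1=1-\lambda$ and $\lambda_2=\lambda$. Then (1) reduces exactly to the three inequalities of Definition \ref{1}, so $A$ is a PCFS.

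\emph{Necessity.} Assume $A$ is a PCFS and proceed by induction on $n$.

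For $n=1$ we must have $\lambda_1=1$, and (1) holds trivially with equality. The case $n=2$ is Definition \ref{1} itself.

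For the inductive step, suppose (1) holds for all families of $n-1$ points. Let $r_1,\dots,r_n\in E$ and $\lambda_i\in\mathbb{I}$ with $\sum_{i=1}^n\lambda_i=1$.

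If $\lambda_n=1$, all other $\lambda_i$ vanish, the combination equals $r_n$, and the inequalities are trivial. Otherwise set $\mu=1-\lambda_n=\sum_{i=1}^{n-1}\lambda_i>0$ and define
$$s=\sum_{i=1}^{n-1}\frac{\lambda_i}{\mu}\,r_i .$$
The weights $\lambda_i/\mu$ lie in $\mathbb{I}$ and sum to $1$, so $s\in E$ is an $(n-1)$-point convex combination. By construction,
$$\sum_{i=1}^n\lambda_i r_i=\mu s+\lambda_n r_n=(1-\lambda_n)s+\lambda_n r_n .$$
Applying Definition \ref{1} to the pair $s, r_n$ with parameter $\lambda_n$ gives
$$\sigma_A\Bigl(\sum_{i=1}^n\lambda_i r_i\Bigr)\ge \sigma_A(s)\wedge\sigma_A(r_n).$$
The induction hypothesis gives $\sigma_A(s)\ge\bigwedge_{i=1}^{n-1}\sigma_A(r_i)$. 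Combining the two yields $\sigma_A\bigl(\sum_{i=1}^n\lambda_i r_i\bigr)\ge\bigwedge_{i=1}^n\sigma_A(r_i)$.

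The argument for $\tau_A$ is identical. For $\gamma_A$ it is the same argument with every inequality reversed and $\wedge$ replaced by $\vee$. This uses that the maps $a\mapsto a\wedge b$ and $a\mapsto a\vee b$ are monotone, so the lower (respectively upper) bound on $\sigma_A(s)$ (respectively $\gamma_A(s)$) propagates through the two-point inequality.

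The only delicate step is the renormalisation: one must treat the degenerate case $\lambda_n=1$ (equivalently $\mu=0$) separately, and confirm that $s$ is a legitimate point of $E$, so that both the induction hypothesis and Definition \ref{1} apply to it. Beyond this, the argument is a routine transfer of the classical Zadeh/Brown characterisation, carried out componentwise for the positive, neutral and negative memberships.
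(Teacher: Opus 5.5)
Your proof is correct and takes essentially the same route as the paper. The paper only quotes this PFS version from earlier work, but it proves the multiset analogue, Proposition~\ref{k}, the same way: it specialises the finite-family condition to two points for one direction, and for the other it uses induction on $n$ via $\sum_{i=1}^{n}\lambda_i r_i=(1-\lambda_n)\sum_{i=1}^{n-1}\frac{\lambda_i}{1-\lambda_n}r_i+\lambda_n r_n$. Your direct $n=2$ specialisation and your explicit treatment of the degenerate case $\lambda_n=1$ are cleaner than the paper's versions.
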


\begin{proposition} \cite{s1}
For a PCFS $A$ on $E$, $Pch(A)$ consists of all the convex combinations of the points of $A$.
\end{proposition}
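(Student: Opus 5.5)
We need to prove the last statement: for a PCFS $A$ on $E$, $Pch(A)$ consists of all convex combinations of the points of $A$. The natural route is double inclusion. Write $K$ for the set of all finite convex combinations $\sum_{i=1}^{n}\lambda_i x_i$ with $x_i \in A$, $\lambda_i \in \mathbb{I}$ and $\sum \lambda_i = 1$, in the sense of Definition \ref{2}.

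\emph{Step 1: $K$ is convex and contains $A$.} Taking $n=1$ and $\lambda_1=1$ gives $A \subseteq K$. Next take two combinations $x=\sum_i \lambda_i x_i$ and $y=\sum_j \mu_j y_j$ and $\theta \in \mathbb{I}$. Then $(1-\theta)x+\theta y=\sum_i (1-\theta)\lambda_i x_i+\sum_j \theta\mu_j y_j$. This is again a finite combination of points of $A$, and its coefficients sum to $(1-\theta)+\theta=1$. Componentwise in $\sigma,\tau,\gamma$, the weights $\lambda_{i\sigma},\lambda_{i\tau},\lambda_{i\gamma}$ are simply rescaled, so the constraint of Definition \ref{2} is preserved. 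Hence $K$ is closed under two-point convex combinations, i.e.\ it is convex.

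\emph{Step 2: $Pch(A)$ is the smallest convex set containing $A$.} The membership functions of $Pch(A)$ are infima and suprema over the PFSs $B$ dominating $A$. This identifies $Pch(A)$ with the intersection of all convex PFSs $B \supseteq A$, and by the corollary on arbitrary intersections of PCFSs that intersection is itself a PCFS. Since $K$ is convex and contains $A$, we get $Pch(A) \subseteq K$.

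\emph{Step 3: the reverse inclusion $K \subseteq Pch(A)$.} Let $B$ be any PCFS containing $A$. By the proposition characterising PCFSs through finite families (inequalities (1)), we have $\sigma_B(\sum \lambda_i x_i) \ge \bigwedge_i \sigma_B(x_i) \ge \bigwedge_i \sigma_A(x_i)$. The analogous inequality holds for $\tau$, and for $\gamma$ with $\vee$ and the inequality reversed. So every element of $K$ lies in every such $B$, and therefore in $Pch(A)$. The hypothesis that $A$ itself is a PCFS lets us take $B=A$ as an admissible set, which guarantees that the family of such $B$ is nonempty.

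The main obstacle is Step 2. The paper's definition of $Pch(A)$ mixes a set-of-combinations description with a lattice description of the membership degrees, so the key task is to justify that these describe the same object, namely the smallest PCFS above $A$. I would handle this by reading the $\wedge/\vee$ formulas as the intersection formula of the PFS intersection definition, applied to the family of convex $B \supseteq A$. Once that identification is fixed, Steps 1 and 3 are routine verifications.
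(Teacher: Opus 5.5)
Your route is the paper's. In the proof it gives for the multiset version (its final theorem, the same argument with superscripts $k$), convex combinations are put into the hull via the finite-family characterization. The set of combinations is then shown to be closed under convex combination and to contain the given set, so it `must coincide' with the hull by minimality. Applying (1) to each admissible $B$ rather than to $Pch(A)$ itself is a minor variation.

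However, your Step 2 does not go through as written, and the paper makes the same leap. On your reading, $Pch(A)$ is the smallest \emph{PCFS} $B$ with $\sigma_B\ge\sigma_A$, $\tau_B\ge\tau_A$, $\gamma_B\le\gamma_A$. Step 1, though, only shows that $K$, a set of points, is closed under convex combinations. That is not convexity in the sense of Definition \ref{1}, and $K$ has no membership degrees with which to dominate $A$. A crisp set viewed as a PFS has $\tau=0$ wherever $\sigma=1$, so in general it fails $\tau_K\ge\tau_A$. For the same reason, ``every element of $K$ lies in $B$'' in Step 3 is not yet meaningful. Relatedly, the triple weights of Definition \ref{2} act on membership values rather than on points of $E$, so they cannot be fed into $\sigma_B$. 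Use scalar weights $\lambda_i\ge 0$ with $\sum_i\lambda_i=1$ throughout.

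The repair is to make $K$ a fuzzy object. Put $\sigma_K(x)=\sup\{\bigwedge_i\sigma_A(x_i) : x=\sum_i\lambda_i x_i\}$, define $\tau_K$ likewise, and set $\gamma_K(x)=\inf\{\bigvee_i\gamma_A(x_i) : x=\sum_i\lambda_i x_i\}$. Then:
\begin{itemize}
\item taking $n=1$ gives $K\supseteq A$;
\item your concatenation of two representations in Step 1, together with $\sup_{r,s}(a_r\wedge b_s)=\sup_r a_r\wedge\sup_s b_s$ (and its dual for $\gamma$), gives the inequalities of Definition \ref{1} for $K$;
\item your chain in Step 3 is exactly $\sigma_B\ge\sigma_K$, $\tau_B\ge\tau_K$, $\gamma_B\le\gamma_K$ for every admissible $B$.
\end{itemize}
The hypothesis that $A$ is a PCFS also does more than make the family of $B$'s nonempty. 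Applying (1) to $A$ itself gives $\sigma_K\le\sigma_A$, $\tau_K\le\tau_A$, $\gamma_K\ge\gamma_A$, hence $K=A$. This is what makes $K$ a PFS at all: for non-convex $A$, the suprema defining $\sigma_K(x)$ and $\tau_K(x)$ may be attained along different representations of $x$ and sum to more than $1$. Since $B=A$ is admissible, it also yields $Pch(A)=A=K$ at once, so the statement is essentially immediate once the objects are typed correctly.
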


\begin{definition}\cite{s1} 
Let $A$ be a PFS on $E$. Then, $A$ is called a picture convex fuzzy set, (PCFS) if for all $x, y \in E$ and $\lambda \in \mathbb{I}$,
$$ \sigma_{A} [(1 - \lambda)x + \lambda y] \geq \sigma_{A}(x) \wedge \sigma_{A}(y),$$ $$\tau_{A} [(1 - \lambda)x + \lambda y] \geq \tau_{A}(x) \wedge \tau_{A}(y)$$ and $$\gamma_{A}[(1 - \lambda)x + \lambda y] \leq \gamma_{A}(x) \vee \gamma_{A}(y)$$
\end{definition}

\section{Notions on Convexity of Picture Fuzzy Multisets}
\begin{definition} \label{f}
Let $\mathcal{D}$ be a PFMS on $\mathcal{C}$. Then, $\mathcal{D}$ is said to be convex, if for all $x, y \in \mathcal{C}$ and $\lambda \in \mathbb{I}$,
$$ \sigma^k_{\mathcal{D}} [(1 - \lambda)x + \lambda y] \geq \sigma^k_{\mathcal{D}}(x) \wedge \sigma^k_{\mathcal{D}}(y),$$ $$\tau^k_{\mathcal{D}} [(1 - \lambda)x + \lambda y] \geq \tau^k_{\mathcal{D}}(x) \wedge \tau^k_{\mathcal{D}}(y)$$ and $$\eta^k_{\mathcal{D}}[(1 - \lambda)x + \lambda y] \leq \eta^k_{\mathcal{D}}(x) \vee \eta^k_{\mathcal{D}}(y)$$
\end{definition}

\noindent Remark \ref{i} can be extended to finitely many points of PFMS $\mathcal{D}$ as follows:

\begin{definition}  \label{g}
Let $\mathcal{D}$ be a PFMS on $\mathcal{C},$ $x_1, \cdots, x_{n} \in \mathcal{D}$. Then, the picture convex combination of $\mathcal{D},$ denoted by $PCC(\mathcal{D})$ is a point $x$ with $x = \sum \limits_{i=1}^{n} \lambda_{i} x_i$ where $$x = (\sigma^k_x, \tau^k_x, \eta^k_x ),~~~ \lambda_i = (\lambda_{i \sigma^k}, \lambda_{i \tau^k}, \lambda_{i \eta^k} ),~~~ x_i = ( \sigma^k_{\mathcal{D}}(x_i), \tau^k_{\mathcal{D}}(x_i), \eta^k_{\mathcal{D}}(x_i) ),$$~~~ $$\sigma^k_x = \sum \limits_{i=1}^{n} \lambda_{i \sigma^k} \sigma^k_{\mathcal{D}}(x_i), ~~~~~~~~~~ \tau^k_x = \sum \limits_{i=1}^{n} \lambda_{i \tau^k} \tau^k_{\mathcal{D}}(x_i),~~~~~~~~~~ \eta^k_x = \sum \limits_{i=1}^{n} \lambda_{i \eta^k} \eta^k_{\mathcal{D}}(x_i)$$  $\sum \limits_{i=1}^{n} (\lambda_{i \sigma^k} + \lambda_{i \tau^k} + \lambda_{i \eta^k}) = 1$~ and~ $ (\lambda_{i \sigma^k} + \lambda_{i \tau^k} + \lambda_{i \eta^k}) \in \mathbb{I}$ for each $i$.
\end{definition}

\noindent Definition \ref{f} can be defined in terms of Definition \ref{g} as;

\begin{definition} 
Let $\mathcal{D}$ be a PFMS on $\mathcal{C}$ and $x_1, \cdots, x_{n}$ be points in $\mathcal{D}$. Then, $\mathcal{D}$ is said to be convex if for all $x_{i} \in \mathcal{D},~ i = 1, \cdots, n$ and $\lambda \in \mathbb{I}$ $$\sigma^k_\mathcal{D} \{ \sum \limits_{i=1}^{n} \lambda_{i} x_i \} \geq \sigma^k_{\mathcal{D}}(x_1) \wedge \cdots \wedge \sigma^k_{\mathcal{D}}(x_n),$$ $$\tau^k_\mathcal{D} \{ \sum \limits_{i=1}^{n} \lambda_{i} a_i \} \geq \tau^k_{\mathcal{D}}(x_1) \wedge \cdots \wedge \tau^k_{\mathcal{D}}(x_n),$$ and $$\eta^k_\mathcal{D} \{ \sum \limits_{i=1}^{n} \lambda_{i} a_i \} \leq \eta^k_{\mathcal{D}}(x_1) \vee \cdots \vee \eta^k_{\mathcal{D}}(x_n)$$ and $\sum \limits_{i=1}^{d} \lambda_i = 1$. Where $\lambda_i$ and $x_i$ are as defined above.
\end{definition}

\begin{definition} 
Let $$\mathcal{D} = \lbrace \langle r, \sigma^{k}_{\mathcal{D}}(r), \tau^{k}_{\mathcal{D}}(r), \eta^{k}_{\mathcal{D}}(r)   \rangle~ |~ r \in \mathcal{C}  \rbrace,$$ $k = 1, 2, \cdots, n.$ be PFMS. Then, the picture convex hull of $\mathcal{D}$, denoted by $Pch(\mathcal{D})$ is defined as $$Pch(\mathcal{D}):= \lbrace x = \sum \limits_{i=1}^{n} \lambda_{i} x_i : x_{i} \in \mathcal{D}; n \geq 1, \lambda_{i} \in \mathbb{I}, \sum \limits_{i=1}^{n} \lambda_{i} = 1 \rbrace$$ where $x$, $\lambda_i$ and $x_i$ are as defined in Definition \ref{2} and the positive, neutral and negative membership degrees are respectively defined as follow: $$\sigma^k_{Pch(\mathcal{D})}(x) = \wedge \lbrace \sigma^k_{\mathcal{E}}(x) | \sigma^k_{\mathcal{E}} \geq \sigma^k_{\mathcal{D}}(x)\rbrace,$$ $$\tau^k_{Pch(\mathcal{D})}(x)(x) = \wedge \lbrace \tau^k_{\mathcal{E}}(x) | \tau^k_{\mathcal{E}} \geq \tau^k_{\mathcal{D}}(x)\rbrace$$ $$\eta^k_{Pch(\mathcal{D})}(x)(x) = \vee \lbrace \eta^k_{\mathcal{E}}(x) | \eta^k_{\mathcal{E}} \leq \eta^k_{\mathcal{D}}(x)\rbrace,$$ where $\mathcal{\mathcal{E}}$ is a PFMS of the form $$\mathcal{E} = \lbrace \langle r, \sigma^{k}_{\mathcal{E}}(r), \tau^{k}_{\mathcal{E}}(r), \eta^{k}_{\mathcal{E}}(r)   \rangle~ |~ r \in \mathcal{C}  \rbrace,$$ $k = 1, 2, \cdots, n.$  
\end{definition}

\begin{proposition} \label{h} 
Let $$\mathcal{D} = \lbrace \langle r, \sigma^{k}_{\mathcal{D}}(r), \tau^{k}_{\mathcal{D}}(r), \eta^{k}_{\mathcal{D}}(r)   \rangle~ |~ r \in \mathcal{C}  \rbrace,$$ $k = 1, 2, \cdots, n.$ be PFMS. Then, $\mathcal{D}$ is convex if and only if $C_{r, s, t} (\mathcal{D})$ is convex.
\end{proposition}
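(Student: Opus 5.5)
The plan is to mimic the classical Brown–Zadeh argument (the Lemma that a fuzzy set is convex iff its cut sets are convex), applied componentwise for each index $k$. First I fix the meaning of the cut set for a PFMS, in analogy with the $(r,s,t)$-cut of a PFS:
\[
C_{r,s,t}(\mathcal{D})=\{x\in\mathcal{C} \mid \sigma^k_{\mathcal{D}}(x)\ge r,\ \tau^k_{\mathcal{D}}(x)\ge s,\ \eta^k_{\mathcal{D}}(x)\le t,\ k=1,\dots,n\},
\]
with $r,s,t\in\mathbb{I}$, $0\le r+s+t\le 1$. Here ``$C_{r,s,t}(\mathcal{D})$ is convex'' means convex as a crisp subset of $\mathcal{C}$ for every admissible $(r,s,t)$. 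I would also state explicitly that the statement can only hold under this reading with the cuts taken levelwise: for the converse I will use the per-level cuts
\[
C^k_{r,s,t}=\{x \mid \sigma^k_{\mathcal{D}}(x)\ge r,\ \tau^k_{\mathcal{D}}(x)\ge s,\ \eta^k_{\mathcal{D}}(x)\le t\}.
\]
Convexity of all $C^k_{r,s,t}$ is implied by convexity of the single-parameter cuts $\{\sigma^k\ge r\}$, $\{\tau^k\ge s\}$, $\{\eta^k\le t\}$.

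\textbf{Forward direction.} Assume $\mathcal{D}$ is convex in the sense of Definition \ref{f}. Take $x,y\in C_{r,s,t}(\mathcal{D})$ and $\lambda\in\mathbb{I}$, and put $z=(1-\lambda)x+\lambda y$. For each $k$, Definition \ref{f} gives
\[
\sigma^k_{\mathcal{D}}(z)\ge \sigma^k_{\mathcal{D}}(x)\wedge\sigma^k_{\mathcal{D}}(y)\ge r,
\]
and similarly $\tau^k_{\mathcal{D}}(z)\ge s$ and $\eta^k_{\mathcal{D}}(z)\le \eta^k_{\mathcal{D}}(x)\vee\eta^k_{\mathcal{D}}(y)\le t$. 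Hence $z\in C_{r,s,t}(\mathcal{D})$, so the cut is convex. This step is routine.

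\textbf{Converse.} Assume every cut is convex. Fix $x,y$, $\lambda$, and $k$, and set $r=\sigma^k_{\mathcal{D}}(x)\wedge\sigma^k_{\mathcal{D}}(y)$. Then $x$ and $y$ lie in the cut $\{\sigma^k_{\mathcal{D}}\ge r\}$ (this is the cut with $s=0$, $t=1$ at level $k$). Convexity of that cut places $z$ in it, so $\sigma^k_{\mathcal{D}}(z)\ge r$. The same argument with $s=\tau^k_{\mathcal{D}}(x)\wedge\tau^k_{\mathcal{D}}(y)$ handles $\tau^k$, and with $t=\eta^k_{\mathcal{D}}(x)\vee\eta^k_{\mathcal{D}}(y)$ handles $\eta^k$. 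This yields the three inequalities of Definition \ref{f}.

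\textbf{Main obstacle.} The constraint $r+s+t\le 1$ prevents choosing, say, $r$ freely while taking $s=0$ and $t=1$. To handle it, I would use the cuts with the unused parameters set to their trivial values: $r$ with $s=0$, $t=0$ only if needed, and more generally the one-sided cuts. I would justify these as admissible cuts by treating each membership function separately, which is the standard reading in \cite{s1}. If that is not allowed, the alternative is a combined choice
\[
r=\sigma^k(x)\wedge\sigma^k(y),\quad s=\tau^k(x)\wedge\tau^k(y),\quad t=\eta^k(x)\vee\eta^k(y).
\]
Then $x,y\in C^k_{r,s,t}$ directly, and all three inequalities follow at once from one cut. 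For $r+s+t\le 1$ one would need $r+s+t\le \sigma^k(x)+\tau^k(x)+t$. This can fail, so I would fall back on the levelwise one-sided cuts.
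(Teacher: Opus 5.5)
Your forward direction is the same as the paper's. You are also right that the converse needs level-$k$ cuts $C^k_{r,s,t}$. Under the intersection-over-$k$ reading, taking $\sigma^2_{\mathcal{D}}\equiv 0$ makes every cut blind to $\sigma^1_{\mathcal{D}}$. The paper's converse also uses level-$k$ cuts without saying so: it places $x,y$ in the cut using only the level-$k$ inequalities.

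The gap is in your converse, at exactly the step you call the main obstacle.
\begin{itemize}
\item You use $\{\sigma^k_{\mathcal{D}}\ge r\}$ as ``the cut with $s=0$, $t=1$''. For $r>0$ this is not an admissible cut, so the hypothesis does not say it is convex. The same applies to the $\tau^k$ cut with $s>0$.
\item Your patch with $t=0$ gives the different set $\{\sigma^k_{\mathcal{D}}\ge r,\ \eta^k_{\mathcal{D}}=0\}$. That set need not contain $x$ and $y$.
\item Appealing to ``treating each membership function separately'' just assumes convexity of sets the statement does not give you.
\item You reject the combined choice because $r+s+t\le 1$ ``can fail''. That claim is false. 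If $\eta^k_{\mathcal{D}}(x)\ge\eta^k_{\mathcal{D}}(y)$, then $t=\eta^k_{\mathcal{D}}(x)$ and
\[
r+s+t\le \sigma^k_{\mathcal{D}}(x)+\tau^k_{\mathcal{D}}(x)+\eta^k_{\mathcal{D}}(x)\le 1 ,
\]
and the other case is symmetric with $y$. Your bound only looked breakable because you anchored it at $x$ rather than at the point where $\eta^k_{\mathcal{D}}$ attains the maximum.
\end{itemize}

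The missing idea is the pointwise constraint $\sigma^k_{\mathcal{D}}+\tau^k_{\mathcal{D}}+\eta^k_{\mathcal{D}}\le 1$. With it, either route closes.
\begin{itemize}
\item The combined cut with $r=\sigma^k_{\mathcal{D}}(x)\wedge\sigma^k_{\mathcal{D}}(y)$, $s=\tau^k_{\mathcal{D}}(x)\wedge\tau^k_{\mathcal{D}}(y)$, $t=\eta^k_{\mathcal{D}}(x)\vee\eta^k_{\mathcal{D}}(y)$ is admissible. It contains $x$ and $y$ and gives all three inequalities at once. This is the paper's argument. The paper quantifies over all $r,s,t\in\mathbb{I}$ and never checks $r+s+t\le 1$, but its choice is admissible for the reason above.
\item If you prefer one-sided cuts, note that they are admissible cuts after all:
\[
\{\sigma^k_{\mathcal{D}}\ge r\}=C^k_{r,0,1-r},\qquad \{\tau^k_{\mathcal{D}}\ge s\}=C^k_{0,s,1-s},\qquad \{\eta^k_{\mathcal{D}}\le t\}=C^k_{0,0,t}.
\]
The first holds because $\sigma^k_{\mathcal{D}}(w)\ge r$ forces $\eta^k_{\mathcal{D}}(w)\le 1-\sigma^k_{\mathcal{D}}(w)-\tau^k_{\mathcal{D}}(w)\le 1-r$, and the second is the same argument with $\tau^k_{\mathcal{D}}$.
\end{itemize}

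Two smaller fixes. State the converse's hypothesis directly as convexity of the level-$k$ cuts $C^k_{r,s,t}$, instead of defining $C_{r,s,t}(\mathcal{D})$ as an intersection and then switching. Also, your sentence that convexity of the single-parameter cuts implies convexity of the $C^k_{r,s,t}$ points the wrong way for the converse. What you need is that the single-parameter cuts are themselves among the $C^k_{r,s,t}$.
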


\begin{proof} Suppose that $\mathcal{D}$ is convex. Let $x, y \in  C_{r, s, t} (\mathcal{D})$, then $\sigma^k_{\mathcal{D}} (x) \geq r$, $\sigma^k_{\mathcal{D}} (y) \geq r,$ $\tau^k_{\mathcal{D}} (x) \geq s$, $\tau^k_{\mathcal{D}} (y) \geq s$ and $\eta^k_{\mathcal{D}} (x) \leq t$, $\eta^k_{\mathcal{D}} (y) \leq t$ thus $\sigma^k_{\mathcal{D}}(x) \wedge \sigma^k_{\mathcal{D}}(y) \geq r,$ $\tau^k_{\mathcal{D}}(x) \wedge \tau^k_{\mathcal{D}}(y) \geq s$ and $\eta^k_{\mathcal{D}}(x) \vee \eta^k_{\mathcal{D}}(y) \leq t$. Since $\mathcal{D}$ is convex, then $$\sigma^k_{\mathcal{D}} [(1 - \lambda)x + \lambda y] \geq \sigma^k_{\mathcal{D}}(x) \wedge \sigma^k_{\mathcal{D}}(y) \geq r,$$ $$\tau^k_{\mathcal{D}} [(1 - \lambda)x + \lambda y] \geq \tau^k_{\mathcal{D}}(x) \wedge \tau^k_{\mathcal{D}}(y) \geq s,$$ $$\eta^k_{\mathcal{D}} [(1 - \lambda)x + \lambda y] \leq \tau^k_{\mathcal{D}}(x) \vee \tau^k_{\mathcal{D}}(y) \leq t$$ for all $\lambda \in \mathbb{I}$. Thus, $(1 - \lambda)x + \lambda y \in C_{r, s, t} (\mathcal{D})$. Hence, $C_{r, s, t} (\mathcal{D})$ is convex.\\ Conversely, suppose that $C_{r, s, t} (\mathcal{D})$ is convex for all $r, s, t \in \mathbb{I}$, let $x, y \in \mathcal{C},~ \lambda \in \mathbb{I}$. Let $ r =\sigma^k_{\mathcal{D}}(x) \wedge \sigma^k_{\mathcal{D}}(y),$ $s = \tau^k_{\mathcal{D}}(x) \wedge \tau^k_{\mathcal{D}}(y)$, and $t = \eta^k_{\mathcal{D}}(x) \vee \eta^k_{\mathcal{D}}(y)$. Now $$\sigma^k_{\mathcal{D}}(x) \geq \sigma^k_{\mathcal{D}}(x) \wedge \sigma^k_{\mathcal{D}}(y) = r,~ \sigma^k_{\mathcal{D}}(y) \geq \sigma^k_{\mathcal{D}}(x) \wedge \sigma^k_{\mathcal{D}}(y) = r,$$ $$\tau^k_{\mathcal{D}}(x) \geq \tau^k_{\mathcal{D}}(y) \wedge \tau^k_{\mathcal{D}}(y) = s,~ \tau^k_{\mathcal{D}}(y) \geq \tau^k_{\mathcal{D}}(x) \wedge \tau^k_{\mathcal{D}}(y) = s,$$ $$\eta^k_{\mathcal{D}}(x) \leq \eta^k_{\mathcal{D}}(y) \vee \eta^k_{\mathcal{D}}(y) = t,~ \eta^k_{\mathcal{D}}(y) \leq \eta^k_{\mathcal{D}}(x) \vee \eta^k_{\mathcal{D}}(y) = t$$ which imply that $x, y \in C_{r, s, t} (\mathcal{D})$. Since $C_{r, s, t} (\mathcal{D})$ is convex, we have $(1 - \lambda)x + \lambda y \in C_{r, s, t} (\mathcal{D}).$ Thus, $$\sigma^k_{\mathcal{D}} [(1 - \lambda)x + \lambda y] \geq r = \sigma^k_{\mathcal{D}}(x) \wedge \sigma^k_{\mathcal{D}}(y),$$ $$\tau^k_{\mathcal{D}} [(1 - \lambda)x + \lambda y] \geq s = \tau^k_{\mathcal{D}}(x) \wedge \tau^k_{\mathcal{D}}(y)$$ and $$\eta^k_{\mathcal{D}} [(1 - \lambda)x + \lambda y] \leq t = \eta^k_{\mathcal{D}}(x) \vee \eta^k_{\mathcal{D}}(y)$$ Therefore, $\mathcal{D}$ is a convex.
\end{proof}

\begin{proposition} \label{i}
Let $\mathcal{D}$ and $\mathcal{E}$ be in PFMS($\mathcal{C}$) and convex. Then, $\mathcal{A} = \mathcal{D} \cap \mathcal{E}$ is a convex.
\end{proposition}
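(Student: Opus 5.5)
We argue directly from Definition \ref{f}, componentwise in $k$. First we fix the meaning of $\mathcal{A} = \mathcal{D} \cap \mathcal{E}$ by extending the PFS intersection to multisets: for each $r \in \mathcal{C}$ and each $k = 1, \dots, n$,
$$\sigma^k_{\mathcal{A}}(r) = \sigma^k_{\mathcal{D}}(r) \wedge \sigma^k_{\mathcal{E}}(r), \quad \tau^k_{\mathcal{A}}(r) = \tau^k_{\mathcal{D}}(r) \wedge \tau^k_{\mathcal{E}}(r), \quad \eta^k_{\mathcal{A}}(r) = \eta^k_{\mathcal{D}}(r) \vee \eta^k_{\mathcal{E}}(r).$$
We should check that $\mathcal{A}$ is again a PFMS. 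Since $\sigma^k_{\mathcal{A}} + \tau^k_{\mathcal{A}} + \eta^k_{\mathcal{A}} \le \sigma^k_{\mathcal{D}} + \tau^k_{\mathcal{D}} + \eta^k_{\mathcal{D}}$ whenever the max for $\eta$ is attained at $\mathcal{D}$, and symmetrically for $\mathcal{E}$, the sum stays at most $1$.

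Fix $x, y \in \mathcal{C}$, $\lambda \in \mathbb{I}$ and $k$, and write $z = (1-\lambda)x + \lambda y$. For the positive part we use convexity of $\mathcal{D}$ and of $\mathcal{E}$ separately:
$$\sigma^k_{\mathcal{A}}(z) = \sigma^k_{\mathcal{D}}(z) \wedge \sigma^k_{\mathcal{E}}(z) \ge \bigl(\sigma^k_{\mathcal{D}}(x) \wedge \sigma^k_{\mathcal{D}}(y)\bigr) \wedge \bigl(\sigma^k_{\mathcal{E}}(x) \wedge \sigma^k_{\mathcal{E}}(y)\bigr).$$
Regrouping by associativity and commutativity of $\wedge$, the right-hand side equals $\sigma^k_{\mathcal{A}}(x) \wedge \sigma^k_{\mathcal{A}}(y)$. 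The neutral part is handled identically with $\tau^k$. The negative part is the dual argument with $\vee$: we get $\eta^k_{\mathcal{A}}(z) \le \eta^k_{\mathcal{A}}(x) \vee \eta^k_{\mathcal{A}}(y)$, using monotonicity of $\vee$ in each argument. Since these three inequalities hold for every $k$, $\mathcal{A}$ satisfies Definition \ref{f}.

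An alternative route is through Proposition \ref{h}, using $C_{r,s,t}(\mathcal{A}) = C_{r,s,t}(\mathcal{D}) \cap C_{r,s,t}(\mathcal{E})$ and the fact that an intersection of convex crisp sets is convex. I would use the direct argument, since it avoids the cut-set bookkeeping.

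The only real subtlety is that the paper's Definition 2.2 of intersection takes $\wedge$ on $\tau$, and we must apply it per index $k$. Because the membership sequences are ordered, we should confirm that taking pointwise $\wedge$/$\vee$ of the $k$-th entries of the two ordered sequences still yields ordered sequences. It does, since the min and max of two decreasing sequences are decreasing. Beyond that, the proof is the routine lattice regrouping above.
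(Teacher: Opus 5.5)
Your proof is correct and follows essentially the same route as the paper: you apply convexity of $\mathcal{D}$ and $\mathcal{E}$ separately at $(1-\lambda)x+\lambda y$, then regroup the minima (maxima for $\eta^k$) by associativity and commutativity to obtain $\sigma^k_{\mathcal{A}}(x)\wedge\sigma^k_{\mathcal{A}}(y)$, and likewise for $\tau^k$ and $\eta^k$. Your write-up is tidier, since it places the one inequality at the convexity step and equality at the regrouping (the paper's displayed chain has these reversed), and your check that $\mathcal{A}$ is again a PFMS is a detail the paper omits.
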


\begin{proof}
Let $x, y \in \mathcal{C}$, take $\lambda \in \mathbb{I}$. Then, $$\sigma^k_{\mathcal{A}} [(1 - \lambda)x + \lambda y] = \sigma^k_{\mathcal{D}} [(1 - \lambda)x + \lambda y] \cap \sigma^k_{\mathcal{E}} [(1 - \lambda)x + \lambda y],$$ $$\tau^k_{\mathcal{A}} [(1 - \lambda)x + \lambda y] = \tau^k_{\mathcal{D}} [(1 - \lambda)x + \lambda y] \cap \tau^k_{\mathcal{E}} [(1 - \lambda)x + \lambda y]$$ and $$\eta^k_{\mathcal{A}} [(1 - \lambda)x + \lambda y] = \eta^k_{\mathcal{D}} [(1 - \lambda)x + \lambda y] \cup \eta^k_{\mathcal{E}} [(1 - \lambda)x + \lambda y]$$ Since $\mathcal{D}$ and $\mathcal{E}$ are convex, $$\sigma^k_{\mathcal{A}} [(1 - \lambda)x + \lambda y] \geq \sigma^k_{\mathcal{D}}(x) \wedge \sigma^k_{\mathcal{D}}(y),~ \sigma^k_{\mathcal{E}} [(1 - \lambda)x + \lambda y] \geq \sigma^k_{\mathcal{E}}(x) \wedge \sigma^k_{\mathcal{E}}(y),$$ $$\tau^k_{\mathcal{A}} [(1 - \lambda)x + \lambda y] \geq \tau^k_{\mathcal{D}}(x) \wedge \tau^k_{\mathcal{D}}(y),~ \tau^k_{\mathcal{E}} [(1 - \lambda)x + \lambda y] \geq \tau^k_{\mathcal{E}}(x) \wedge \tau^k_{\mathcal{E}}(y)$$ and $$\eta^k_{\mathcal{A}} [(1 - \lambda)x + \lambda y] \leq \eta^k_{\mathcal{D}}(x) \vee \eta^k_{\mathcal{D}}(y),~ \eta^k_{\mathcal{E}} [(1 - \lambda)x + \lambda y] \leq \eta^k_{\mathcal{E}}(x) \vee \eta^k_{b}(y).$$ Thus, 
\begin{eqnarray*} \sigma^k_{\mathcal{A}} [(1 - \lambda)x + \lambda y]  
& = & [\sigma^k_{\mathcal{D}} (x) \wedge \sigma^k_{\mathcal{D}} (y)] \cap [\sigma^k_{\mathcal{E}} (x) \wedge \sigma^k_{\mathcal{E}} (y)]\\
&\geq & [\sigma^k_{\mathcal{D}}(x) \cap \sigma^k_{\mathcal{E}}(x)] \wedge [\sigma^k_{\mathcal{D}}(y) \cap \sigma^k_{\mathcal{E}}(y)]\\
&\geq &[\sigma^k_{\mathcal{D}} \cap \sigma^k_{\mathcal{E}}](x) \wedge [\sigma^k_{\mathcal{D}} \cap \sigma^k_{\mathcal{E}}](y)\\ & \geq & [\sigma^k_{\mathcal{D}\cap \mathcal{E}}](x) \wedge [\sigma^k_{\mathcal{D} \cap \mathcal{E} }](y)\\ & = & \sigma^k_{\mathcal{A}} (x) \wedge \sigma^k_{\mathcal{A}} (y).
\end{eqnarray*},
\begin{eqnarray*} \tau^k_{\mathcal{A}} [\lambda x + ((1 - \lambda)x + \lambda y]  
& = & [\tau^k_{\mathcal{D}} (x) \wedge \tau^k_{\mathcal{D}} (y)] \cap [\tau^k_{\mathcal{E}} (x) \wedge \tau^k_{\mathcal{E}} (y)]\\
&\geq & [\tau^k_{\mathcal{D}}(x) \cap \tau^k_{\mathcal{E}}(x)] \wedge [\tau^k_{\mathcal{D}}(y) \cap \tau^k_{\mathcal{E}}(y)]\\
&\geq &[\tau^k_{\mathcal{D}} \cap \tau^k_{\mathcal{E}}](x) \wedge [\tau^k_{\mathcal{D}} \cap \tau^k_{\mathcal{E}}](y)\\ & \geq & [\tau^k_{\mathcal{D} \cap \mathcal{E}}](x) \wedge [\tau^k_{\mathcal{D}  \cap \mathcal{E}}](y)\\ & = & \tau^k_{\mathcal{A}} (x) \wedge \tau^k_{\mathcal{A}} (y).
\end{eqnarray*}
and 
\begin{eqnarray*} \eta^k_{\mathcal{A}} [(1 - \lambda)x + \lambda y]  
& = & [\eta^k_{\mathcal{D}} (x) \vee \eta^k_{\mathcal{D} } (y)] \cup [\eta^k_{\mathcal{E}} (x) \vee \eta^k_{\mathcal{E}} (y)]\\
&\leq & [\eta_{\mathcal{D}}(x) \cup \eta^k_{\mathcal{E} }(x)] \vee [\eta^k_{\mathcal{D}}(y) \cup \eta^k_{\mathcal{E}}(y)]\\
&\leq &[\eta^k_{\mathcal{D}} \cup \eta^k_{\mathcal{E}}](x) \vee [\eta^k_{\mathcal{D}} \cup \eta^k_{\mathcal{E}}](y)\\ & \leq & [\eta^k_{\mathcal{D} \cup \mathcal{E}}](x) \vee [\eta^k_{\mathcal{D}  \cup \mathcal{E}}](y)\\ & = & \eta^k_{\mathcal{A}} (x) \vee \eta^k_{\mathcal{A}} (y).
\end{eqnarray*}
\end{proof}

\begin{corollary} \label{j} 
Given any arbitrary family $\{ \mathcal{D}_i, i = 1, 2, \cdots, \}$ of PCFSs in $\mathcal{C}$, then their intersection is also convex.
\end{corollary}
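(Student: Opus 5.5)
Since Proposition \ref{i} only handles two sets and an induction on it would only give finite intersections, the plan is to repeat its argument directly for an arbitrary index set, with infima and suprema in place of binary meets and joins. First fix the meaning of the intersection of a family $\{\mathcal{D}_i\}_{i \in I}$ of convex PFMSs in PFMS($\mathcal{C}$), following the binary intersection of PFSs applied to each coordinate $k = 1, \dots, n$ of the membership sequences. Writing $\mathcal{A} = \bigcap_{i} \mathcal{D}_i$, set
$$\sigma^k_{\mathcal{A}}(x) = \bigwedge_{i} \sigma^k_{\mathcal{D}_i}(x),\quad \tau^k_{\mathcal{A}}(x) = \bigwedge_{i} \tau^k_{\mathcal{D}_i}(x),\quad \eta^k_{\mathcal{A}}(x) = \bigvee_{i} \eta^k_{\mathcal{D}_i}(x),$$
where $\bigwedge$ and $\bigvee$ denote infimum and supremum in $\mathbb{I}$. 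These exist because $\mathbb{I}$ is a complete lattice. I would also note that $\mathcal{A}$ is again a PFMS: for each $i$, $\sigma^k_{\mathcal{A}} + \tau^k_{\mathcal{A}} \le \sigma^k_{\mathcal{D}_i} + \tau^k_{\mathcal{D}_i} \le 1 - \eta^k_{\mathcal{D}_i}$. Taking the infimum over $i$ of the right-hand side gives $\sigma^k_{\mathcal{A}} + \tau^k_{\mathcal{A}} + \eta^k_{\mathcal{A}} \le 1$.

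Next, fix $x, y \in \mathcal{C}$, $\lambda \in \mathbb{I}$, and put $z = (1-\lambda)x + \lambda y$. By Definition \ref{f} applied to each $\mathcal{D}_j$,
$$\sigma^k_{\mathcal{D}_j}(z) \ge \sigma^k_{\mathcal{D}_j}(x) \wedge \sigma^k_{\mathcal{D}_j}(y) \ge \sigma^k_{\mathcal{A}}(x) \wedge \sigma^k_{\mathcal{A}}(y).$$
The second inequality holds because $\sigma^k_{\mathcal{A}} \le \sigma^k_{\mathcal{D}_j}$ pointwise. The lower bound on the right does not depend on $j$, so taking the infimum over $j$ gives $\sigma^k_{\mathcal{A}}(z) \ge \sigma^k_{\mathcal{A}}(x) \wedge \sigma^k_{\mathcal{A}}(y)$. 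The $\tau^k$ inequality follows identically. For $\eta^k$ the argument is dual: $\eta^k_{\mathcal{D}_j}(z) \le \eta^k_{\mathcal{D}_j}(x) \vee \eta^k_{\mathcal{D}_j}(y) \le \eta^k_{\mathcal{A}}(x) \vee \eta^k_{\mathcal{A}}(y)$, and taking the supremum over $j$ gives the required bound. This holds for every $k$, so $\mathcal{A}$ is convex in the sense of Definition \ref{f}.

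Alternatively, Proposition \ref{h} offers a cut-set route. One checks that $C_{r,s,t}(\mathcal{A}) = \bigcap_i C_{r,s,t}(\mathcal{D}_i)$, because an infimum is $\ge r$ exactly when every term is $\ge r$, and a supremum is $\le t$ exactly when every term is $\le t$. An intersection of crisp convex sets is convex, so Proposition \ref{h} then gives convexity of $\mathcal{A}$. I expect the only real obstacle to be definitional rather than computational. The paper never defines arbitrary intersections of PFMSs, and its multiset data has only a finite sequence index $k$. So the key step is stating the coordinatewise $\inf/\sup$ convention clearly and checking that the result is still a PFMS. Once that is settled, the infimum/supremum step above is immediate.
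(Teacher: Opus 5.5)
Your proof is correct, and it takes a different route from the paper. The paper's proof is the classical crisp-set argument transplanted almost word for word: elements of the intersection lie in every $\mathcal{D}_i$, each $\mathcal{D}_i$ is closed under convex combinations, so the intersection is too. It is phrased as the membership values $\sigma^k_{\mathcal{G}}[(1-\lambda)x+\lambda y]$, $\tau^k_{\mathcal{G}}[\cdots]$, $\eta^k_{\mathcal{G}}[\cdots]$ \emph{belonging to} $\mathcal{G}=\bigwedge_i\mathcal{D}_i$. The paper never says what the arbitrary intersection is, and it treats membership degrees as if they were points of a crisp set.

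Your main argument stays at the level of membership functions. You fix the coordinatewise $\inf/\sup$ convention, which is the natural extension of the binary PFS intersection, and check that the result is again a PFMS. You then get $\sigma^k_{\mathcal{A}}(z)\ge\sigma^k_{\mathcal{A}}(x)\wedge\sigma^k_{\mathcal{A}}(y)$ by bounding every $\sigma^k_{\mathcal{D}_j}(z)$ below by a quantity independent of $j$ and taking the infimum, with the dual step for $\eta^k$. This is self-contained, works for any index set by completeness of $\mathbb{I}$, and makes explicit the definitional step the paper skips.

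Your cut-set alternative is essentially the rigorous form of what the paper's membership argument is reaching for. The identity $C_{r,s,t}(\mathcal{A})=\bigcap_i C_{r,s,t}(\mathcal{D}_i)$ is exactly where \emph{belongs to the intersection iff belongs to each member} becomes literally true. The corollary then reduces to the classical fact about crisp convex sets, at the price of depending on Proposition \ref{h}.

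One small addition to your PFMS check: the coordinatewise infimum of decreasing sequences is again decreasing. So the ordering $\sigma^1_{\mathcal{A}}\ge\sigma^2_{\mathcal{A}}\ge\cdots$ required of the positive membership sequence is preserved.
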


\begin{proof}
Let $\lbrace \mathcal{D}_{i} \rbrace_{i}$ be an arbitrary family of PCFS of $\mathcal{C}$. Then, the intersection $\mathcal{G} = \bigwedge_{i} \mathcal{D}_i$ is convex. If $$\sigma^k_{\mathcal{G}}[(1 - \lambda)x + \lambda y] \in \mathcal{G},~ \tau^k_{\mathcal{G}}[(1 - \lambda)x + \lambda y] \in \mathcal{G}~ \text{and}~ \eta^k_{\mathcal{G}}[(1 - \lambda)x + \lambda y] \in \mathcal{G},$$ then they belong also to every $\mathcal{D}_i,$ since $\mathcal{D}_i$ is convex. It implies that $$\sigma^k_{\mathcal{G}}[(1 - \lambda)x + \lambda y] \in \mathcal{G},~ \tau^k_{\mathcal{G}}[(1 - \lambda)x + \lambda y] \in \mathcal{G}~ \text{and}~ \eta^k_{\mathcal{G}}[(1 - \lambda)x + \lambda y] \in \mathcal{G}$$ belong to every $\mathcal{D}_i,$ and consequently to their intersection, i.e., to $\mathcal{G}$. 
\end{proof}
\ \\
\begin{proposition} \label{k} 
A PFMS $\mathcal{D}$ is a convex if and only if for all finite family $r_{i} \in \mathcal{C}$ and $\lambda_{i} \in \mathbb{I},~ i = 1, 2, \cdots n$ such that $\sum \limits_{i=1}^{n} \lambda_i = 1$, we have $$\sigma^k_{\mathcal{D}} \left(\sum \limits_{i=1}^{n} \lambda_{i} r_{i} \right) \geq \bigwedge^{n}_{i=1} \sigma^k_{\mathcal{D}} (r_i),~~~~\tau^k_{\mathcal{D}} \left(\sum \limits_{i=1}^{n} \lambda_{i} r_{i} \right) \geq \bigwedge^{n}_{i=1} \tau^k_{\mathcal{D}} (r_i), ~~~~\eta^k_{\mathcal{D}} \left(\sum \limits_{i=1}^{n} \lambda_{i} r_{i} \right) \leq \bigvee^{n}_{i=1} \eta^k_{\mathcal{D}} (r_i) \ \ \ \ \ \ \ \ \ \ (1).$$ 
\end{proposition}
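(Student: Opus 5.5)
The plan is to prove the two directions separately, working throughout with a fixed index $k$, since both Definition \ref{f} and condition (1) are imposed componentwise for each $k = 1, 2, \cdots, n$.

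\emph{Sufficiency.} Assume (1) holds for every finite family. Take $n = 2$, $r_1 = x$, $r_2 = y$, $\lambda_1 = 1 - \lambda$ and $\lambda_2 = \lambda$. Then (1) becomes exactly the three inequalities of Definition \ref{f}, so $\mathcal{D}$ is convex.

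\emph{Necessity.} Assume $\mathcal{D}$ is convex. I will prove (1) by induction on the number $n$ of points.
\begin{itemize}
\item For $n = 1$ there is nothing to prove, since $\lambda_1 = 1$.
\item For $n = 2$, (1) is Definition \ref{f} itself.
\item Suppose (1) holds for every family of $n-1$ points, and take $r_1, \ldots, r_n$ with weights summing to $1$.
\end{itemize}
In the inductive step, if $\lambda_n = 1$ the claim is trivial, since all other weights vanish and the combination is just $r_n$. Otherwise put $\mu = 1 - \lambda_n = \sum_{i=1}^{n-1} \lambda_i > 0$ and
$$ z = \sum_{i=1}^{n-1} \frac{\lambda_i}{\mu} r_i, \qquad \text{so that} \qquad \sum_{i=1}^{n} \lambda_i r_i = \mu z + (1 - \mu) r_n = (1-\lambda_n) z + \lambda_n r_n .$$
Applying Definition \ref{f} to the pair $z, r_n$ with parameter $\lambda_n$ gives
$$\sigma^k_{\mathcal{D}}\Big(\sum_{i=1}^{n} \lambda_i r_i\Big) \geq \sigma^k_{\mathcal{D}}(z) \wedge \sigma^k_{\mathcal{D}}(r_n).$$
The weights $\lambda_i/\mu$, $i \le n-1$, are in $\mathbb{I}$ and sum to $1$, so the induction hypothesis gives $\sigma^k_{\mathcal{D}}(z) \geq \bigwedge_{i=1}^{n-1} \sigma^k_{\mathcal{D}}(r_i)$. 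Combining the two yields the $\sigma^k$ inequality in (1). The $\tau^k$ case is identical. The $\eta^k$ case is the same argument with all inequalities reversed and $\wedge$ replaced by $\vee$.

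The only delicate point is the inductive step: the reduction to the case $\lambda_n < 1$, and the tacit requirement that $\mathcal{C}$ be closed under convex combinations, so that $z$ and $\sum \lambda_i r_i$ are points at which the membership sequences are defined. I will state that $\mathcal{C}$ is a convex subset of a real vector space, as is implicit in Definition \ref{f}. Alternatively, the argument could go through Proposition \ref{h}: (1) says that each cut $C_{r,s,t}(\mathcal{D})$ is closed under $n$-point convex combinations, which follows from two-point convexity of the cuts by the same induction. The direct induction is shorter, so I will use it.
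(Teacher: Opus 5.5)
Your proof is correct and follows the same route as the paper. The inductive step, writing $\sum_{i=1}^n \lambda_i r_i = (1-\lambda_n)\sum_{i=1}^{n-1}\frac{\lambda_i}{1-\lambda_n} r_i + \lambda_n r_n$, is exactly the paper's, and your specialization of (1) to $n=2$ is the valid core of the paper's more roundabout argument, which combines two convex combinations $a$ and $b$. Your explicit handling of $\lambda_n = 1$ and of the requirement that $\mathcal{C}$ be closed under convex combinations also cleans up points the paper leaves garbled (its ``Assume $\lambda_i \in \{0,1\}$'').
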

\ \\
\begin{proof}
Assume that $\mathcal{D}$ contains all finite family $r_{i} \in \mathcal{C}$ and $\lambda_{i} \in \mathbb{I},~ i = 1, 2, \cdots n$ such that $\sum \limits_{i=1}^{n} \lambda_i = 1$, then this holds for two points $a,~ b \in \mathcal{C}$ where $a = \sum \limits_{i=1}^{p} \lambda_{i} r_i,~~ a = ( \sigma^k_{a}, \tau^k_{a}, \eta^k_{a} )$,\\ $\lambda_i = ( \lambda_{i \sigma^k_1}, \lambda_{i \tau^k_1}, \lambda_{i \eta^k_1} )$ and $r_i = (\sigma^k_{\mathcal{D}}(r_i), \tau^k_{\mathcal{D}}(r_i), \eta^k_{\mathcal{D}}(r_i) )$, $\sigma^k_{a} = \sum \limits_{i=1}^{p} \lambda_{i \sigma^k_{1}} \sigma^k_{\mathcal{D}}(r_i)$,~ $\tau^k_{a} = \sum \limits_{i=1}^{p} \lambda_{i \tau^k_{1}} \tau^k_{\mathcal{D}}(r_i)$,\\ $\eta^k_{a} = \sum \limits_{i=1}^{p} \lambda_{i \eta^k_{1}} \eta^k_{\mathcal{D}}(r_i)$, $~~\sum \limits_{i=1}^{p} (\lambda_{i \sigma^k_{1}} + \lambda_{i \tau^k_{1}} + \lambda_{i \eta^k_{1}}) = 1$~ and~ $(\lambda_{i \sigma^k_{1}} + \lambda_{i \tau^k_{1}} + \lambda_{i \eta^k_{1}}) \in \mathbb{I}$ for all $i$ and \\$b = \sum \limits_{i=1}^{p} \phi_{i} r_i,~~ b = ( \sigma^k_{b}, \tau^k_{b}, \eta^k_{b} )$,~~ $\phi_i = ( \phi_{i \sigma^k_{2}}, \phi_{i \tau^k_{2}}, \phi_{i \eta^k_{2}} )$ and $r_i = ( \sigma^k_{\mathcal{D}}(r_i), \tau^k_{\mathcal{D}}(r_i), \eta^k_{\mathcal{D}})$,\\ $\sigma^k_{b} = \sum \limits_{i=1}^{p} \phi_{i \sigma^k_{2}} \sigma^k_{\mathcal{D}}(r_i)$, $\tau^k_{b} = \sum \limits_{i=1}^{p} \phi_{i \tau^k_{2}} \tau^k_{\mathcal{D}}(r_i)$, $\eta^k_{b} = \sum \limits_{i=1}^{p} \phi_{i \eta_{2}} \eta^k_{\mathcal{D}}(r_i)$, $~~\sum \limits_{i=1}^{p} (\phi_{i \sigma^k_{2}} + \phi_{i \tau^k_{2}} + \phi_{i \eta^k_{2}}) = 1$~ and\\ $(\phi_{i \sigma^k_{2}} + \phi_{i \tau^k_{2}} + \phi_{i \eta^k_{2}}) \in \mathbb{I}$ for all $i$.\\ Thus, $$\sigma^k_{\mathcal{D}} \left( ( 1 - \delta) a + \delta b \right) \geq \sigma^k_{\mathcal{D}}(a) \wedge \sigma^k_{\mathcal{D}}(b),$$ $$\tau^k_{\mathcal{D}} \left( ( 1 - \delta) a + \delta b \right) \geq \tau^k_{\mathcal{D}}(a) \wedge \tau^k_{\mathcal{D}}(b)$$ and $$\eta^k_{\mathcal{D}} \left( ( 1 - \delta) a + \delta b \right) \leq \eta^k_{\mathcal{D}}(a) \vee \eta^k_{\mathcal{D}}(b)$$ The positive, neutral and negative membership degrees of $a$ and $b$ are given by
\begin{eqnarray*}
\sigma^k_{\mathcal{D}}& = & (1 - \delta) \sum \limits_{i=1}^{p} \lambda_{i \sigma_{1}}\sigma^k_{\mathcal{D}}(r_i) +  \delta \sum \limits_{i=1}^{p} \phi_{i \sigma^k_{2}} \sigma^k_{\mathcal{D}}(r_i)\\
& = & \sum \limits_{i=1}^{p}\left( (1 - \delta) \lambda_{i \sigma^k_{1}} +  \delta \phi_{i \sigma^k_{2}} \right) \sigma^k_{\mathcal{D}}(r_i)
\end{eqnarray*}
	
\begin{eqnarray*}
\tau^k_{\mathcal{D}}& = & (1 - \delta) \sum \limits_{i=1}^{p} \lambda_{i \tau^k_{1}} \tau^k_{\mathcal{D}}(r_i) + \delta \sum \limits_{i=1}^{p} \phi_{i \tau^k_{2}} \tau^k_{\mathcal{D}}(r_i)\\
& = & \sum \limits_{i=1}^{p}\left( (1 - \delta) \lambda_{i \tau^k_{1}} +  \delta \phi_{i \tau^k_{2}} \right) \tau^k_{\mathcal{D}}(r_i)
\end{eqnarray*}  and 
\begin{eqnarray*}
\eta^k_{\mathcal{D}}& = & (1 - \delta) \sum \limits_{i=1}^{p} \lambda_{i \eta_{1}} \eta^k_{\mathcal{D}}(r_i) + \delta \sum \limits_{i=1}^{p} \phi_{i \eta_{2}} \eta^k_{\mathcal{D}}(r_i)\\
& = & \sum \limits_{i=1}^{p}\left( (1 - \delta) \lambda_{i \eta^k_{1}} +  \delta \phi_{i \eta^k_{2}} \right) \eta^k_{\mathcal{D}}(r_i)
\end{eqnarray*}
respectively with 
$$\sum \limits_{i=1}^{p}\left[ (1 - \delta) \lambda_{i \sigma^k_{1}} + \delta \phi_{i \sigma^k_{2}} \right] + \sum \limits_{i=1}^{p}\left[ (1 - \delta) \lambda_{i \tau^k_{1}} +  \delta \phi_{i \tau^k_{2}} \right] + \sum \limits_{i=1}^{p}\left[ (1 - \delta) \lambda_{i \eta^k_{1}} +  \delta \phi_{i \eta^k_{2}} \right] $$
\begin{eqnarray*} 
& = & \left(\sum \limits_{i=1}^{p} (1 - \delta) \lambda_{i \sigma^k_{1}} + \sum \limits_{i=1}^{p} (1 - \delta) \lambda_{i \tau^k_{1}} \lambda_{i \tau^k_{1}} + \sum \limits_{i=1}^{p} (1 - \delta) \lambda_{i \eta^k_{1}} \lambda_{i \eta^k_{1}} \right) + \left( \sum \limits_{i=1}^{p} \delta \phi_{i \sigma^k_{2}} + \sum \limits_{i=1}^{p} \delta \phi_{i \tau^k_{2}} + \sum \limits_{i=1}^{p} \delta \phi_{i \eta^k_{2}} \right)\\
& = & (1 - \delta) \left(\sum \limits_{i=1}^{p} \lambda_{i \sigma^k_{1}} + \sum \limits_{i=1}^{p} \lambda_{i \tau^k_{1}} + \sum \limits_{i=1}^{p} \lambda_{i \eta_{1}} \right) + \delta \left( \sum \limits_{i=1}^{p} \phi_{i \sigma^k_{2}} + \sum \limits_{i=1}^{p} \phi_{i \tau^k_{2}} + \sum \limits_{i=1}^{p} \phi_{i \eta^k_{2}} \right)\\
& = & 1 - \delta + \delta\\
&= & 1.
\end{eqnarray*} and since $$(\lambda_{i \sigma^k} + \lambda_{i \tau^k} + \lambda_{i \eta^k}) \in \mathbb{I},~~ (\phi_{i \sigma^k} + \phi_{i \tau^k} + \phi_{i \eta^k}) \in \mathbb{I}$$ then $$(((1 - \delta) \lambda_{i \sigma^k_{1}} + \delta) \phi_{i \sigma^k_{2}}) + ((1 - \delta) \lambda_{i \tau^k_{1}} + \delta \phi_{i \tau^k_{2}})) + ((1 - \delta) \lambda_{i \eta^k_{1}} + \delta \phi_{i \eta^k_{2}}) \in \mathbb{I}$$
such that $$\sigma^k_{\mathcal{D}} [(1 - \delta) a + \lambda b] \geq \sigma^k_{\mathcal{D}}(a) \wedge \sigma^k_{\mathcal{D}}(b),~$$ $$\tau^k_{\mathcal{D}} [(1 - \lambda) a + \lambda b] \geq \tau^k_{\mathcal{D}}(a) \wedge \tau^k_{\mathcal{D}}(b)$$ and $$\eta^k_{\mathcal{D}} [(1 - \lambda) a + \lambda b] \leq \eta^k_{\mathcal{D}}(a) \vee \eta^k_{\mathcal{D}}(b).$$ The convexity of $\mathcal{D}$ means that $\mathcal{D}$ is closed under convex combinations of two points. Therefore, $\mathcal{D}$ is convex.\\ Conversely, suppose that $\mathcal{D}$ is PFMS and $n \in \mathbb{N}$. We shall use induction on $n$. For $n = 1$, the assertion is trivial. For $(n - 1)~ \text{to}~ n,~ n \geq 2$, let $r_1, \cdots, r_n \in \mathcal{D}$ and $\lambda_{i} \in \mathbb{I},~ i = 1, 2, \cdots n$ with $\sum \limits_{i=1}^{n} \lambda_i = 1$. Assume $\lambda_{i} \in \{0, 1 \}$ and $\delta_{i}:=\dfrac{\lambda_{i}}{1 - \lambda_{n}},~ i = 1, \cdots, n-1$, thus $\delta_i \in \mathbb{I}$ and $\sum \limits_{i=1}^{n} \delta_i = 1$.
By the induction hypothesis, $\sum \limits_{i=1}^{n-1} \delta_{i} r_{i} \in \mathcal{D}$. Put $$\sum \limits_{i=1}^{n} \lambda_{i} r_{i} = (1 - \lambda_{n}) \sum \limits_{i=1}^{n-1} \delta_{i} r_{i} + \lambda_{n} r_{n} $$ Then, we have
\begin{eqnarray*}
\sigma^k_{\mathcal{D}} \left(\sum \limits_{i=1}^{n} \lambda_{i} r_{i} \right)
& \geq & \sigma^k_{\mathcal{D}}(\sum \limits_{i=1}^{n-1}\delta_{i} r_{i}) \wedge \sigma^k_{\mathcal{D}}(r_n)\\
& \geq & \sigma^k_{\mathcal{D}}(\sum \limits_{i=1}^{n-1} ( \dfrac{\lambda_{i}}{1 - \lambda_{n}} )(r_{i})) \wedge \sigma^k_{\mathcal{D}}(r_n)\\
& \geq & \bigwedge_{i}^{n} \sigma^k_{\mathcal{D}} (r_i),
\end{eqnarray*}
\begin{eqnarray*}
\tau^k_{\mathcal{D}} \left(\sum \limits_{i=1}^{n} \lambda_{i} r_{i} \right) 
& \geq & \tau^k_{\mathcal{D}}(\sum \limits_{i=1}^{n-1}\delta_{i} r_{i}) \wedge \tau^k_{\mathcal{D}}(r_n)\\
& \geq & \tau^k_{\mathcal{D}}(\sum \limits_{i=1}^{n-1} ( \dfrac{\lambda_{i}}{1 - \lambda_{n}} )(r_{i})) \wedge \tau^k_{\mathcal{D}}(r_n)\\
& \geq & \bigwedge_{i}^{n} \tau^k_{\mathcal{D}} (r_i)
\end{eqnarray*} and 
\begin{eqnarray*}
\eta^k_{\mathcal{D}} \left(\sum \limits_{i=1}^{n} \lambda_{i} r_{i} \right)
& \leq & \eta^k_{\mathcal{D}}(\sum \limits_{i=1}^{n-1}\delta_{i} r_{i}) \vee \eta^k_{\mathcal{D}}(r_n)\\
& \leq & \eta^k_{\mathcal{D}}(\sum \limits_{i=1}^{n-1} ( \dfrac{\lambda_{i}}{1 - \lambda_{n}} )(r_{i})) \vee \eta^k_{\mathcal{D}}(r_n)\\
& \leq & \bigvee_{i}^{n} \eta^k_{\mathcal{D}} (r_i)
\end{eqnarray*} 
\end{proof}
\ \\

\begin{theorem} 
For a PCFS $\mathcal{D}$ on $\mathcal{C}$, $Pch(\mathcal{D})$ consists of all the convex combinations of the points of $\mathcal{D}$.
\end{theorem}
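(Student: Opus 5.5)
We follow the classical argument that a convex hull equals the set of all convex combinations, carried out componentwise in each index $k=1,\dots,n$. Let $\mathcal{K}$ denote the collection of all convex combinations $x=\sum_{i=1}^{m}\lambda_i x_i$ of points $x_i\in\mathcal{D}$, with $\lambda_i$ as in Definition \ref{g}. The goal is $Pch(\mathcal{D})=\mathcal{K}$. This will follow from three facts: $\mathcal{K}$ contains $\mathcal{D}$, $\mathcal{K}$ is convex, and every convex PFMS containing $\mathcal{D}$ contains $\mathcal{K}$. Since $Pch(\mathcal{D})$ is defined through the infimum (for $\sigma^k,\tau^k$) and supremum (for $\eta^k$) over all PFMSs $\mathcal{E}$ dominating $\mathcal{D}$, these three facts pin it down as $\mathcal{K}$.

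\textbf{Step 1 (containment).} Take $m=1$ and $\lambda_1=1$. Then each $x\in\mathcal{D}$ lies in $\mathcal{K}$, so $\mathcal{D}\subseteq\mathcal{K}$.

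\textbf{Step 2 ($\mathcal{K}$ is convex).} Let $a=\sum_{i=1}^{p}\lambda_i r_i$ and $b=\sum_{i=1}^{p}\phi_i r_i$ be in $\mathcal{K}$. By padding with zero weights, we may assume both use the same points $r_i$. For $\delta\in\mathbb{I}$ we have
$$(1-\delta)a+\delta b=\sum_{i=1}^{p}\bigl((1-\delta)\lambda_i+\delta\phi_i\bigr)r_i.$$
The bookkeeping from the first half of the proof of Proposition \ref{k} shows that the new weights, split into their $\sigma^k,\tau^k,\eta^k$ parts, still sum to $1$ and satisfy the per-index constraint in $\mathbb{I}$. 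Hence $(1-\delta)a+\delta b\in\mathcal{K}$, so the inequalities of Definition \ref{f} hold for $\mathcal{K}$.

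\textbf{Step 3 (minimality).} Let $\mathcal{E}$ be any convex PFMS with $\sigma^k_{\mathcal{E}}\ge\sigma^k_{\mathcal{D}}$, $\tau^k_{\mathcal{E}}\ge\tau^k_{\mathcal{D}}$ and $\eta^k_{\mathcal{E}}\le\eta^k_{\mathcal{D}}$. By Proposition \ref{k}, for any $x=\sum\lambda_i r_i\in\mathcal{K}$,
$$\sigma^k_{\mathcal{E}}(x)\ge\bigwedge_i\sigma^k_{\mathcal{E}}(r_i)\ge\bigwedge_i\sigma^k_{\mathcal{D}}(r_i),$$
and similarly for $\tau^k$, with the reversed inequality and $\bigvee$ for $\eta^k$. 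So every such $\mathcal{E}$ dominates the membership values on $\mathcal{K}$. Taking the infimum (respectively supremum) over these $\mathcal{E}$ gives that $Pch(\mathcal{D})$ is exactly supported on $\mathcal{K}$. Its memberships are determined by the smallest convex dominant, and the hypothesis that $\mathcal{D}$ is a PCFS lets us apply Proposition \ref{k} to $\mathcal{D}$ itself, so the minimal convex dominant is attained.

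\textbf{Main obstacle.} The hard part is Step 3. The paper's definition of $Pch$ mixes a set-level description (convex combinations) with a membership-level description (inf/sup over dominating $\mathcal{E}$). Making the identification rigorous means interpreting "$\mathcal{E}\supseteq\mathcal{D}$ and convex" via the inclusion order on PFMSs, applied for each $k$. The plan is to handle this by working separately for each fixed $k$: reduce to the single-sequence PFS case, where the corresponding proposition of \cite{s1} already applies, and then reassemble over $k=1,\dots,n$. Checking that the decreasing order of the membership sequences is preserved is not required, since all inequalities are componentwise.
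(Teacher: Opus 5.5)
This is essentially the paper's argument. The paper also uses Proposition \ref{k} to place every convex combination of points of $\mathcal{D}$ inside $Pch(\mathcal{D})$; you apply it to each convex dominant $\mathcal{E}$ rather than to $Pch(\mathcal{D})$ itself, which has the small advantage of not presupposing that $Pch(\mathcal{D})$ is convex. Conversely, the paper checks with the same weight bookkeeping as in Proposition \ref{k} that the set of convex combinations contains $\mathcal{D}$ and is closed under two-point combinations, so by minimality it coincides with $Pch(\mathcal{D})$.

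The weak point you share with the paper is that $\mathcal{K}$ is never given membership functions, so your Step 2 claim that $\mathcal{K}$ satisfies Definition \ref{f} really only establishes closure of a point set. The natural choices are $\sigma^k_{\mathcal{K}}(x)=\sup\bigwedge_i\sigma^k_{\mathcal{D}}(r_i)$ and $\eta^k_{\mathcal{K}}(x)=\inf\bigvee_i\eta^k_{\mathcal{D}}(r_i)$, taken over representations $x=\sum_i\lambda_i r_i$. When $\mathcal{D}$ is convex, Proposition \ref{k} collapses these to the memberships of $\mathcal{D}$, which is exactly your closing remark that $\mathcal{D}$ is its own minimal convex dominant.
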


\begin{proof}
The points of $\mathcal{D}$ belong to $Pch(\mathcal{D})$, so all their convex combinations belong to $Pch(\mathcal{D})$ by Proposition \ref{k}. Conversely, let $a,~ b \in \mathcal{C}$ such that $a = \sum \limits_{i=1}^{m} \lambda_{i} r_i,~~ a = ( \sigma^k_{a}, \tau^k_{a}, \eta^k_{a} )$,\\ $\lambda_i = ( \lambda_{i \sigma^k_1}, \lambda_{i \tau^k_1}, \lambda_{i \eta^k_1} )$ and $r_i =  (\sigma^k_{\mathcal{D}}(r_i), \tau^k_{\mathcal{D}}(r_i), \eta^k_{\mathcal{D}}(r_i))$, $\sigma^k_{a} = \sum \limits_{i=1}^{m} \lambda_{i \sigma^k_{1}} \sigma^k_{A}(r_i)$,~ $\tau^k_{a} = \sum \limits_{i=1}^{m} \lambda_{i \tau^k_{1}} \tau^k_{\mathcal{D}}(r_i)$,\\ $\eta^k_{a} = \sum \limits_{i=1}^{m} \lambda_{i \eta^k_{1}} \eta^k_{\mathcal{D}}(r_i)$, $~~\sum \limits_{i=1}^{m} (\lambda_{i \sigma^k_{1}} + \lambda_{i \tau^k_{1}} + \lambda_{i \eta^k_{1}}) = 1$~ and~ $(\lambda_{i \sigma^k_{1}} + \lambda_{i \tau^k_{1}} + \lambda_{i \eta^k_{1}}) \in \mathbb{I}$ for all $i~ \text{and}~ r_{i}~ \in \mathcal{D}$ and $b = \sum \limits_{j=1}^{n} \beta_{j} s_j,~~ b = ( \sigma^k_{b}, \tau^k_{b}, \eta^k_{b} )$,~~ $\beta_j = ( \beta_{j \sigma^k_{2}}, \beta_{j \tau^k_{2}}, \beta_{j \eta^k_{2}} )$ and $s_j = ( \eta^k_{\mathcal{D}}(s_j), \tau^k_{\mathcal{D}}(s_j), \eta_{\mathcal{D}}(s_j) )$,\\ $\sigma^k_{b} = \sum \limits_{j=1}^{n} \beta_{j \sigma^k_{2}} \sigma^k_{\mathcal{D}}(s_j)$, $\tau^k_{b_{2}} = \sum \limits_{j=1}^{n} \beta_{j \tau^k_{2}} \tau_{\mathcal{D}}(s_j)$,~ $\eta^k_{b_{2}} = \sum \limits_{j=1}^{n} \beta_{j \eta^k_{2}} \eta^k_{\mathcal{D}}(s_j)$, $~~~\sum \limits_{j=1}^{n} (\beta_{j \sigma^k_{2}} + \beta^k_{j \tau^k_{2}} +  \beta_{j \eta^k_{2}}) = 1$~ and\\ $(\phi_{i \sigma^k_{2}} + \phi_{i \tau^k_{2}} + \phi_{i \eta^k_{2}}) \in \mathbb{I}$ for all $i~ \text{and}~ s_{i}~ \in \mathcal{D}.$ Thus, 
$$((1 - \delta) \lambda_{i \sigma^k_{1}} + \delta \phi_{j \sigma^k_{2}}) + ((1 - \delta) \lambda_{i \tau^k_{1}} + \delta \phi_{j \tau^k_{2}}) + ((1 - \delta) \lambda_{i \eta^k_{1}} + \delta \phi_{j \eta^k_{2}})) \in \mathbb{I}$$ Now, 
\begin{eqnarray*}
\sigma^k_{\mathcal{D}}(a) \wedge \sigma^k_{\mathcal{D}}(b) & \geq & \sigma^k_{\mathcal{D}}(\sum \limits_{i=1}^{m} \lambda_{i} r_{i}) \wedge \sigma^k_{\mathcal{D}}(\sum \limits_{j=1}^{n} \phi_{j} s_{j})\\
& \geq & \sigma^k_{\mathcal{D}}(r_1) \wedge \cdots \wedge \sigma^k_{\mathcal{D}}(r_m)\wedge \sigma^k_{\mathcal{D}}(s_{1)} \wedge \cdots \wedge \sigma^k_{\mathcal{D}}(s_n)
\end{eqnarray*} 
\begin{eqnarray*}
\tau^k_{\mathcal{D}}(a) \wedge \tau^k_{\mathcal{D}}(b) & \geq & \tau^k_{\mathcal{D}}(\sum \limits_{i=1}^{m} \lambda_{i} r_{i}) \wedge \tau^k_{\mathcal{D}}(\sum \limits_{j=1}^{n} \phi_{j} a_{j})\\
& \geq & \tau^k_{\mathcal{D}}(r_1) \wedge \cdots \wedge \tau^k_{\mathcal{D}}(r_m)\wedge \tau^k_{\mathcal{D}}(s_{1)} \wedge \cdots \wedge \tau^k_{\mathcal{D}}(s_n)
\end{eqnarray*}   
and \begin{eqnarray*}
\eta^k_{\mathcal{D}}(a) \vee \eta^k_{\mathcal{D}}(b) & \leq & \eta^k_{\mathcal{D}}(\sum \limits_{i=1}^{m} \lambda_{i} r_{i}) \vee \eta_{\mathcal{D}}(\sum \limits_{j=1}^{n} \phi_{j} a_{j})\\
& \leq & \eta_{\mathcal{D}}(r_1) \vee \cdots \vee \eta_{\mathcal{D}}(r_m)\vee \eta^k_{\mathcal{D}}(s_{1)} \vee \cdots \vee \eta^k_{A}(s_n)
\end{eqnarray*} 
Hence, $\lambda a + \phi b$ is another convex combination of points of $\mathcal{D}$. Therefore, the set of convex combinations of points of $A$ is itself a convex set that contains $\mathcal{D}$ which must coincide with the $Pch(\mathcal{D})$. 
\end{proof}

\ \\ \ \\
\section{Conclusion}
In this paper, we have introduced the notion of convexity of picture fuzzy multisets and some of their properties were obtained after studying the concept of picture fuzzy multisets

\ \\ \ \\

\end{document}